\newcommand{\R}{{\mathbb R}}
\newcommand{\N}{{\mathbb N}}
\newcommand{\EE}{{\mathbb E}}
\newcommand{\PP}{{\mathbb P}}
\newcommand{\eul}{{\widehat X}}
\newcommand{\eultr}{{\widehat Z}}
\newcommand{\ind}{1}
\newcommand{\usn}{\underline {s}_n}
\newcommand{\utn}{\underline {t}_n}
\newcommand{\sgn}{\operatorname{sgn}}
\newcommand{\eps}{\varepsilon}
\newcommand{\F}{{\mathcal F}}
\theoremstyle{plain}
\newtheorem{theorem}{Theorem}
\newtheorem{prop}{Proposition}
\newtheorem{lemma}{Lemma}
\theoremstyle{definition}
\newtheorem{rem}{Remark}
\begin{document}
\title[]{On the performance of the Euler-Maruyama scheme  for 
SDEs with discontinuous drift coefficient}

\author[M\"uller-Gronbach]
{Thomas M\"uller-Gronbach}
\address{
Faculty of Computer Science and Mathematics\\
University of Passau\\
Innstrasse 33 \\
94032 Passau\\
Germany} \email{thomas.mueller-gronbach@uni-passau.de}

\author[Yaroslavtseva]
{Larisa Yaroslavtseva}
\address{
Seminar for Applied Mathematics\\
Department of Mathematics\\
HG G 53.2\\
R\"amistrasse 101 \\
8092 Zurich\\
Switzerland} \email{larisa.yaroslavtseva@sam.math.ethz.ch}

\begin{abstract}
Recently a lot of effort has been invested to analyze the 
$L_p$-error 
 of the Euler-Maruyama scheme in the case of stochastic differential equations (SDEs) with a drift coefficient that may have discontinuities in space. For scalar SDEs with a piecewise Lipschitz drift coefficient and a Lipschitz diffusion coefficient that is non-zero at the discontinuity points  of the drift coefficient so far only 
an
$L_p$-error rate 
of at least
$1/(2p)-$
has been proven.
In the present paper we show that under the latter  conditions
  on the coefficients of the SDE
 the Euler-Maruyama scheme in fact achieves
an $L_p$-error rate of at least $1/2$ for all $p\in [1,\infty)$
as in the case of SDEs with Lipschitz coefficients.
\end{abstract}
\maketitle

\section{Introduction}
Consider an autonomous stochastic differential equation (SDE)
\begin{equation}\label{sde000}
\begin{aligned}
dX_t & = \mu(X_t) \, dt + \sigma(X_t) \, dW_t, \quad t\in [0,1],\\
X_0 & = x_0
\end{aligned}
\end{equation}
with deterministic initial value $x_0\in\R$,  drift coefficient $\mu\colon\R\to\R$,  diffusion coefficient $\sigma\colon \R\to\R$ and  $1$-dimensional driving Brownian motion $W$. 
If~\eqref{sde000} has a unique strong solution $X$ then
a classical numerical approach
for approximating 
$X_1$  based on $n$ observations of $W$  is provided by
the Euler-Maruyama   scheme  given  by
$\eul_{n,0}=x_0$ and
\[
\eul_{n,(i+1)/n}=\eul_{n,i/n}+\mu(\eul_{n,i/n})\cdot 1/n+\sigma(\eul_{n, i/n})\cdot (W_{(i+1)/n}-W_{i/n})
\]
for $i\in\{0,\ldots, n-1\}$.

It is well-known that if the coefficients $\mu$ and $\sigma$ are  Lipschitz continuous then 
for all $p \in [1,\infty)$ the Euler-Maruyama scheme
at the final time
achieves 
an 
$L_p$-error rate 
   of  
at least $1/2$ in terms of the number $n$ of observations of $W$, 
i.e. 
for all $p \in [1,\infty)$ there exists $c\in (0,\infty)$ such that 
for all $n\in\N$,
\begin{equation}\label{result}
\bigl(\EE\bigl[ |X_1-\eul_{n,1}|^p\bigr]\bigr)^{1/p}\leq \frac{c}{\sqrt n}.
\end{equation}

In this article we study the $L_p$-error 
of
 $\eul_{n,1}$  in the case when the drift coefficient $\mu$ may have finitely many discontinuity points. More precisely, we assume that 
 the drift coefficient
 $\mu$ is piecewise Lipschitz continuous in the sense that
\begin{itemize}
\item[(A1)] there exist $k\in\N_0$ and $\xi_0, \ldots, \xi_{k+1}\in [-\infty,\infty]$ with $-\infty=\xi_0<\xi_1<\ldots < \xi_k <\xi_{k+1}=\infty$ such that
 $\mu$ is Lipschitz continuous on the interval $(\xi_{i-1}, \xi_i)$ for all $i\in\{1, \ldots, k+1\}$,
\end{itemize}
and we assume that the diffusion coefficient  $\sigma$  is Lipschitz continuous and  
non-zero
 at the  potential discontinuity points of $\mu$, i.e. 
\begin{itemize}
\item[(A2)] $\sigma$ is Lipschitz continuous on $\R$ and $\sigma(\xi_i) \neq 0$ for all $i\in\{1,\ldots,k\}$.
\end{itemize}
Note that under the assumptions
(A1) and (A2) the equation~\eqref{sde000} has a unique strong solution, see ~\cite[Theorem 2.2]{LS16}.

Numerical   approximation  of SDEs with 
a 
drift coefficient that is discontinuous in space has gained a lot of interest in recent years, see~\cite{ g98b, gk96b} for results on convergence in probability and  almost sure convergence  of the Euler-Maruyama scheme  and~\cite{ GLN17, HalidiasKloeden2008,  LS16,  LS15b, LS18, NSS18,Tag16, Tag2017b, Tag2017a} for results on $L_p$-approximation. 
In particular, in~\cite{LS18, Tag16, Tag2017b, Tag2017a} the $L_p$-error 
of the Euler-Maruyama  scheme has been studied for such SDEs.
The most far going results in the latter four articles provide for the one-dimensional SDE \eqref{sde000} under the assumptions (A1) and (A2)
\begin{itemize}
\item[(i)] an $L_1$-error rate of at least $1/2$ for $\eul_{n,1}$  if, additionally to (A1) and (A2), the coefficients $\mu$ and $\sigma$ are bounded, $\mu$ is integrable on $\R$ or one-sided Lipschitz continuous, and $\sigma$ is bounded away from zero, see~\cite{Tag16, Tag2017b},
\item[(ii)] an $L_1$-error rate  of at least $1/2-$ for $\eul_{n,1}$ if, additionally to (A1) and (A2), the coefficients  $\mu$ and $\sigma$ are bounded and $\sigma$ is bounded away from zero, see \cite{Tag2017b},
\item[(iii)] an $L_2$-error rate of  at least $1/4-$ for $\eul_{n,1}$, if, additionally to (A1) and (A2), the coefficients $\mu$ and $\sigma$ are bounded, see ~\cite{LS18}.
\end{itemize}
We add that the proof techniques in~\cite{LS18} can readily be adapted to show that  the Euler-Maruyama scheme
at the final time $\eul_{n,1}$ achieves an
$L_p$-error rate of at least $1/(2p)-$ for all $p\in[1, \infty)$ if the coefficients $\mu$ and $\sigma$ are bounded and satisfy the assumptions (A1) and (A2), see the discussion at the beginning of Section \ref{Proofs}.
Furthermore, in~\cite[Remark 4.2]{NSS18} it is stated that the proof techniques in~\cite{LS18} could be modified to cover the case of unbounded coefficients $\mu$ and $\sigma$ as well.

To summarize, under the assumptions (A1) and (A2)  it was only known up to now that  
the Euler-Maruyama scheme 
at the final time  
achieves 
an $L_p$-error rate of at least $1/(2p)-$
for all $p\in[1, \infty)$,
and it was a challenging question whether  
these error bounds 
can be improved, and if so, whether under the assumptions (A1) and (A2) the 
Euler-Maruyama scheme 
at the final time
even achieves an $L_p$-error rate of at least $1/2$ for all $p\in[1, \infty)$ as it is the case for SDEs with Lipschitz continuous coefficients, see~\eqref{result}.

Note that the recent literature on numerical approximation of  SDEs contains a number of examples  of SDEs with coefficients that are not Lipschitz continuous 
and such that
the Euler-Maruyama scheme at the final time does not achieve
an $L_p$-error rate of $1/2$,
see \cite{ GJS17, hhj12, HefJ17, hjk11, JMGY15, MGRY2018, Y17}. Furthermore, in~\cite{GLN17} numerical studies are carried out for a number of SDEs~\eqref{sde000} with 
a discontinuous 
$\mu$ satisfying (A1) and $\sigma=1$, and for several of these 
SDEs
an empirical $L_2$-error rate  significantly smaller than $1/2$ is observed for  
the Euler-Maruyama scheme at the final time.

However,  regardless of the latter negative findings it turns out that under the 
assumptions
(A1) and (A2)  the Euler-Maruyama 
scheme at the final time
$\eul_{n,1}$ 
in fact satisfies~\eqref{result} for all $p\in [1,\infty)$. This estimate is an immediate consequence of our main result, Theorem \ref{Thm1}, which states that under the assumptions (A1) and (A2) the maximum error of the time-continuous Euler-Maruyama scheme 
achieves at least the rate
$1/2$  in 
the
$p$-th mean sense, for all $p\in[1,\infty)$, see Section~\ref{s3}.

We add that in~\cite{LS16, LS15b} a numerical 
method for approximating $X_1$
is constructed that is based on a suitable transformation of the solution $X$ of~\eqref{sde000} and achieves an $L_2$-error rate of at least $1/2$ in terms of the number of observations of $W$ under the assumptions (A1) and (A2). Furthermore, in~\cite{NSS18} an adaptive Euler-Maruyama scheme is constructed, which achieves 
at the final time
an $L_2$-error rate of at least $1/2-$ in terms of the average number of observations of $W$ under the assumptions (A1) and (A2). However, in contrast to the classical Euler-Maruyama scheme, an implementation of 
either of 
the latter two methods  requires the knowledge of the points of discontinuity of $\mu$.

In this 
paper we furthermore consider the piecewise linear interpolation $\overline{X}_n =(\overline X_{n,t})_{t\in [0,1]}$ of the Euler-Maruyama scheme $(\widehat X_{n,i/n})_{i=0,\dots,n}$
and we study the performance 
 of $\overline{X}_n$ globally on $[0,1]$.  Using Theorem~\ref{Thm1} we show that if the assumptions (A1) and (A2) are satisfied then 
for all $p\in [1,\infty)$ and all $q\in [1,\infty]$  there exists $c\in(0, \infty)$ such that for all $n\in\N$, 
\begin{equation}\label{ll4}
\bigl(\EE\bigl[\|X-\overline X_n\|_q^p\bigr]\bigr)^{1/p}\leq \begin{cases} 
c/\sqrt n, & \text{ if }q < \infty,\\ c \sqrt{\ln (n+1)}/\sqrt{n}, & \text{ if }q = \infty,
\end{cases} 
\end{equation}
where $\|\cdot\|_q$ denotes the $L_q$-norm on the space of real-valued, continuous functions on $[0,1]$, see Theorem~\ref{Thm2}.

Our results provide upper error bounds for the Euler-Maruyama scheme at the final time $\widehat X_{n,1}$ and the piecewise linear interpolation $\overline X_n$ of the Euler-Maruyama scheme 
in terms of the number $n$ of observations of the driving Brownian motion $W$ that are used. It is natural to ask whether these bounds are asymptotically sharp or whether 
there exist alternative algorithms 
based on $n$ observations of $W$
that achieve under the assumptions (A1) and (A2) better rates of convergence in terms of the number $n$. For the error criteria considered in~\eqref{ll4} the answer to this question is already known. The corresponding error rates can not be improved in general, see~\cite{HHMG18, hmr01, MG02_habil} for the case $q\in [1,\infty)$ and~\cite{HHMG18,m02} for the case $q=\infty$. For 
the
$L_p$-approximation of $X_1$ the question is open up to now.
For this problem it is so far only known that under the assumptions (A1) and (A2) it is impossible to obtain 
an $L_p$-error rate
better than $1$ in general, see~\cite{HHMG18,m04}. Whether or not there exists an algorithm that 
approximates $X_1$  
under the assumptions (A1) and (A2)
with an $L_p$-error rate
 better 
 than $1/2$ in terms of the number of observations of $W$ remains a challenging question.  

In the present paper we have only studied scalar SDEs 
while the results 
in~\cite{LS15b, LS18, NSS18, Tag16} also cover the case of multidimensional SDEs.
We believe however that our proof techniques can be extended to obtain for all $p\in[1,\infty)$ an $L_p$-error rate of at least $1/2$ 
for the Euler-Maruyama scheme at the final time in a 
suitable
multidimensional setting as well. This will be the subject of future work.

We briefly describe the content of the paper. Our error estimates, Theorem~\ref{Thm1} and Theorem~\ref{Thm2}, are stated in Section~\ref{s3}. Section~\ref{s4} contains 
proofs of these results and 
a discussion on
the relation of our analysis and the analysis of the Euler-Maruyama scheme carried out in~\cite{LS18}.

\section{Error estimates for the Euler-Maruyama scheme}\label{s3}

Let
$ ( \Omega, \mathcal{F}, \PP ) $ 
be a probability space with a normal filtration
$ ( \mathcal{F}_t )_{ t \in [0,1] } $,
let
$
  W \colon [0,1] \times \Omega \to \R
$
be an 
$ ( \mathcal{F}_t )_{ t \in [0,1] } $-Brownian motion
on $ ( \Omega, \mathcal{F}, \PP ) $, let $x_0\in\R$ and let $\mu, \sigma\colon\R\to\R$ be functions that satisfy the following two conditions.
\begin{itemize}
\item[(A1)] There exist $k\in\N_0$ and $\xi_0, \ldots, \xi_{k+1}\in [-\infty,\infty]$ with $-\infty=\xi_0<\xi_1<\ldots < \xi_k <\xi_{k+1}=\infty$ such that
 $\mu$ is Lipschitz continuous on the interval $(\xi_{i-1}, \xi_i)$ for all $i\in\{1, \ldots, k+1\}$,
\item[(A2)] $\sigma$ is Lipschitz continuous on $\R$ and $\sigma(\xi_i) \neq 0$ for all $i\in\{1,\ldots,k\}$.
\end{itemize}

We consider the SDE
\begin{equation}\label{sde0}
\begin{aligned}
dX_t & = \mu(X_t) \, dt + \sigma(X_t) \, dW_t, \quad t\in [0,1],\\
X_0 & = x_0,
\end{aligned}
\end{equation}
which has a unique strong solution,
see~\cite[Theorem 2.2]{LS16}.

\begin{rem}
Note that if in (A2) the assumption $\sigma(\xi_i) \neq 0$ for all $i\in\{1,\ldots,k\}$ is violated then the existence of a strong solution of \eqref{sde0} can not be guaranteed anymore, see ~\cite[Example 4.2]{LTS15}.
\end{rem}

For $n\in\N$ let $\eul_{n}=(\eul_{n,t})_{t\in[0,1]}$  denote  the time-continuous Euler-Maruyama scheme with step-size $1/n$ associated to the SDE \eqref{sde0}, i.e. $\eul_{n}$ is recursively given by
$\eul_{n,0}=x_0$ and
\[
\eul_{n,t}=\eul_{n,i/n}+\mu(\eul_{n,i/n})\cdot (t-i/n)+\sigma(\eul_{n,i/n})\cdot (W_t-W_{i/n})
\]
for $t\in (i/n,(i+1)/n]$ 
and $i\in\{0,\ldots,n-1\}$. We have the following error estimates for $\eul_{n}$.

\begin{theorem}\label{Thm1}
Let $p\in [1,\infty)$. Then
there exists $c\in(0, \infty)$ such that for all $n\in\N$, 
\begin{equation}\label{l3}
\bigl(\EE\bigl[\|X-\eul_{n}\|_\infty^p\bigr]\bigr)^{1/p}\leq \frac{c}{\sqrt n}. 
\end{equation}
\end{theorem}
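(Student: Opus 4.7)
The natural strategy is to follow the Zvonkin-type transformation approach of Leobacher--Sz\"olgyenyi. First, construct a bi-Lipschitz function $G\colon\R\to\R$ of class $C^1$ with Lipschitz derivative $G'$ (so that $G''$ exists almost everywhere and is bounded) such that $Z_t := G(X_t)$ satisfies an SDE
\[
dZ_t = \tilde\mu(Z_t)\,dt + \tilde\sigma(Z_t)\,dW_t
\]
with \emph{Lipschitz} coefficients $\tilde\mu = (G'\mu + \tfrac12 G''\sigma^2)\circ G^{-1}$ and $\tilde\sigma = (G'\sigma)\circ G^{-1}$. Set $\eultr_{n,t}:= G(\eul_{n,t})$. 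Since $G^{-1}$ is Lipschitz, it suffices to prove $\EE[\|Z-\eultr_n\|_\infty^p]^{1/p}\le c/\sqrt n$.

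Applying an It\^o--Tanaka formula (valid for $G\in C^{1,1}$) to $G(\eul_n)$ and then adding and subtracting the un-frozen arguments yields
\[
\eultr_{n,t} - Z_t = \int_0^t \bigl(\tilde\mu(\eultr_{n,s}) - \tilde\mu(Z_s)\bigr) ds + \int_0^t \bigl(\tilde\sigma(\eultr_{n,s}) - \tilde\sigma(Z_s)\bigr) dW_s + R_{n,t},
\]
where (writing $\eta_n(s)=\lfloor ns\rfloor/n$) the remainder
\[
R_{n,t} = \int_0^t G'(\eul_{n,s})\bigl(\mu(\eul_{n,\eta_n(s)})-\mu(\eul_{n,s})\bigr) ds + \int_0^t G'(\eul_{n,s})\bigl(\sigma(\eul_{n,\eta_n(s)})-\sigma(\eul_{n,s})\bigr) dW_s + \tfrac12\int_0^t G''(\eul_{n,s})\bigl(\sigma^2(\eul_{n,\eta_n(s)})-\sigma^2(\eul_{n,s})\bigr) ds
\]
collects the three frozen-coefficient perturbations. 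Since $\tilde\mu,\tilde\sigma$ are Lipschitz, a standard Gronwall argument combined with the Burkholder--Davis--Gundy and Doob inequalities reduces the theorem to proving $\EE[\|R_n\|_\infty^p]^{1/p}\le c/\sqrt n$. The $\sigma$- and $\sigma^2$-pieces of $R_n$ are comparatively easy because $\sigma$ is globally Lipschitz: the standard Euler increment estimate $\EE[\sup_s|\eul_{n,s}-\eul_{n,\eta_n(s)}|^p]^{1/p}\le c/\sqrt n$ (with BDG and Doob for the stochastic piece) yields the claim.

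The heart of the proof, and the main obstacle, is controlling the drift piece
\[
J_{n,t} = \int_0^t G'(\eul_{n,s})\bigl(\mu(\eul_{n,\eta_n(s)})-\mu(\eul_{n,s})\bigr) ds,
\]
where the discontinuities of $\mu$ appear. Decomposing $\mu$ into a globally Lipschitz part plus a piecewise-constant "jump" part supported on $\{\xi_1,\ldots,\xi_k\}$, the Lipschitz part contributes $O(1/\sqrt n)$ by the same Euler increment estimate. For the jump part, the integrand is nonzero only on subintervals of the Euler grid on which $\eul_n$ crosses some $\xi_j$, and on such a subinterval is uniformly bounded; hence the jump contribution is at most $(C/n)\,N_n$, where
\[
N_n := \#\bigl\{i\in\{0,\ldots,n-1\} : \eul_n\text{ crosses some }\xi_j \text{ on }(i/n,(i+1)/n]\bigr\}.
\]
Thus it suffices to prove $\EE[N_n^p]^{1/p}\le c\sqrt n$ for every $p\in[1,\infty)$. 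Conditional on $\F_{i/n}$, the indicator of a crossing is a Bernoulli variable whose success probability is bounded via Gaussian tail estimates on the Brownian increment together with assumption (A2) — here one uses that $\sigma(\xi_j)\ne0$ and Lipschitz continuity of $\sigma$ force $|\sigma|$ to be bounded below in a neighbourhood of each $\xi_j$, while a local density/occupation-measure estimate for the Euler scheme at level $\xi_j$ produces a uniform bound of order $1/\sqrt n$. Writing $N_n$ as the sum of its conditional mean (of total order $\sqrt n$) and a bounded-increment martingale, and applying the discrete BDG or Rosenthal inequality to the latter, gives $\EE[N_n^p]^{1/p}\le c\sqrt n$ and hence $\EE[\|J_n\|_\infty^p]^{1/p}\le c/\sqrt n$. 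This concentration step is the key improvement over the cruder bound $|J_n|\le C\,\mathrm{Leb}\{\text{bad }s\}$ underlying \cite{LS18}, which only yields the $L_p$-rate $1/(2p)$.

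Putting the pieces together, a Gronwall argument on $t\mapsto\EE[\sup_{s\le t}|\eultr_{n,s}-Z_s|^p]$ closes the loop and gives $\EE[\|Z-\eultr_n\|_\infty^p]^{1/p}\le c/\sqrt n$. Since $G^{-1}$ is Lipschitz, this transfers directly to the claimed bound \eqref{l3}.
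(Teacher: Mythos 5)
Your overall architecture coincides with the paper's: transform via $G$, apply It\^o, observe that the only term not controlled by Lipschitz estimates and standard Euler increment bounds is supported on the set of times at which $\eul_{n,s}$ and $\eul_{n,\underline{s}_n}$ straddle some $\xi_j$, and reduce the theorem to showing that this "crossing" quantity is $O(n^{-1/2})$ in $L_p$ for every $p$. (Whether one isolates the difficulty in the jump part of $\mu$, as you do, or in the jump part of $G''$, as the paper does, is immaterial.) Your reduction to $\EE[N_n^p]^{1/p}\le c\sqrt{n}$ is also correct and is equivalent to the paper's Proposition~1.

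The genuine gap is in your proof of $\EE[N_n^p]^{1/p}\le c\sqrt{n}$ for $p>1$. Writing $N_n=\sum_i\EE[Y_i\,|\,\F_{i/n}]+M_n$, the martingale part $M_n$ is indeed handled by Azuma/Rosenthal. But the compensator $C_n:=\sum_i\PP(\text{crossing on the }i\text{-th interval}\,|\,\F_{i/n})$ is itself a \emph{random variable}: each summand can be close to $1$ when $\eul_{n,i/n}$ is near some $\xi_j$, and consecutive summands are strongly positively correlated, so $C_n$ can be of order $n$ on an event of small probability. Knowing $\EE[C_n]\le c\sqrt{n}$ (which is what your occupation-measure estimate delivers) says nothing about $\EE[C_n^p]$ for $p>1$; and bounding $\EE[C_n^p]$ is exactly as hard as the original problem, since $C_n$ is again (up to Gaussian tails) a discrete occupation time of a $1/\sqrt{n}$-neighbourhood of the $\xi_j$. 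Iterating the martingale decomposition does not terminate. This is precisely the point where the paper has to work: its Proposition~1 proceeds by induction on integer $p$, writing $\EE[(\int_0^1\ind_{A_{n,t}}dt)^{q+1}]$ as an iterated integral over ordered times and conditioning successively via the Markov property; the key technical devices are (a) Lemma~4, which converts a crossing on $(\underline{t}_n,t]$ into a smallness condition on $|\eul_{n,\underline{t}_n-(t-\underline{t}_n)}-\xi|$ at a \emph{reflected} time so that the resulting quantity is a continuous occupation time (amenable to the local-time estimate of Lemma~3) and the Gaussian tail is averaged against $e^{-z^2/2}\,dz$ rather than cut at a $\sqrt{\ln n}$ threshold (a hard threshold would cost you an extra $\sqrt{\ln n}$ in the final rate), and (b) the second part of Lemma~5, which carries a weight $(\eul_{n,\cdot}-\xi)^2$ through the iteration and gains a factor $1/n$ (not merely $1/\sqrt{n}$) at each conditioning step, because the weight is small on the crossing event. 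Without an argument of this kind your concentration step does not close, so the proposal as written establishes the claim only for $p=1$.
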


Next, we study the performance of the piecewise linear interpolation 
$\overline{X}_n = (\overline X_{n,t})_{t\in [0,1]}$ 
of the time-discrete Euler-Maruyama scheme $(\widehat X_{n,i/n})_{i=0,\dots,n}$, i.e. 
\[
\overline X_{n,t} = (n\cdot t-i)\cdot\widehat X_{n,(i+1)/n} + (i+1-n\cdot t)\cdot \widehat X_{n,i/n}
\]
for $t\in [i/n,(i+1)/n]$
 and $i\in\{0,\ldots,n-1\}$. We  have the following error estimates for $\overline X_n$.

\begin{theorem}\label{Thm2}
Let $p\in [1,\infty)$ and $q\in [1,\infty]$. Then   
there exists $c\in(0, \infty)$ such that for all $n\in\N$, 
\begin{equation}\label{l4}
\bigl(\EE\bigl[\|X-\overline X_n\|_q^p\bigr]\bigr)^{1/p}\leq \begin{cases} 
c/\sqrt n, & \text{ if }q < \infty,\\ c \sqrt{\ln (n+1)}/\sqrt{n}, & \text{ if }q = \infty.
\end{cases} 
\end{equation}
\end{theorem}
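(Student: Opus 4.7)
The plan is to split via the triangle inequality
\[
\|X - \overline X_n\|_q \le \|X - \widehat X_n\|_q + \|\widehat X_n - \overline X_n\|_q
\]
and bound the two pieces separately. Since $[0,1]$ has unit length, the monotonicity $\|\cdot\|_q \le \|\cdot\|_\infty$ combined with Theorem~\ref{Thm1} immediately yields $\bigl(\EE[\|X - \widehat X_n\|_q^p]\bigr)^{1/p} \le c/\sqrt n$ for every $q \in [1,\infty]$, so the remaining task is to control the interpolation defect $\widehat X_n - \overline X_n$.

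Plugging the definitions of $\widehat X_n$ and $\overline X_n$ into their difference, a short direct calculation gives, for $t \in [i/n, (i+1)/n]$,
\[
\widehat X_{n,t} - \overline X_{n,t} = \sigma(\widehat X_{n,i/n}) \cdot B^{(i)}_t, \quad B^{(i)}_t := W_t - W_{i/n} - n(t - i/n)(W_{(i+1)/n} - W_{i/n}),
\]
where $B^{(i)}$ is a Brownian-bridge-type process on $[i/n,(i+1)/n]$ depending only on $(W_s - W_{i/n})_{s\in[i/n,(i+1)/n]}$. In particular, $B^{(i)}$ is independent of $\mathcal F_{i/n}$ and hence of $\sigma(\widehat X_{n,i/n})$, the family $\{B^{(i)}\}_{0 \le i < n}$ is independent, and $\sup_{t}|B^{(i)}_t|$ is distributed as $n^{-1/2}$ times a standard Brownian bridge supremum. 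Combining Theorem~\ref{Thm1} with the classical moment bound $\EE[\|X\|_\infty^r] < \infty$ for all $r \in [1,\infty)$ (which follows from (A1), (A2) via the linear growth of $\mu,\sigma$, BDG, and Gronwall) gives $\sup_{n\in\N} \EE[\|\widehat X_n\|_\infty^r] < \infty$, and Lipschitz continuity of $\sigma$ then yields a uniform bound on $\EE[\max_{0 \le i < n}|\sigma(\widehat X_{n,i/n})|^r]$ for every $r \in [1,\infty)$.

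For $q \in [1,\infty)$ I would set $r := \max(p,q)$ and use Jensen's inequality together with the $L_q \le L_r$ embedding on $[0,1]$ to reduce the $L_p$-norm of $\|\widehat X_n - \overline X_n\|_q$ to
\[
\Bigl(\int_0^1 \EE\bigl[|\widehat X_{n,t} - \overline X_{n,t}|^r\bigr] dt\Bigr)^{1/r},
\]
after which independence and the Gaussian bound $\EE[|B^{(i)}_t|^r] \le c n^{-r/2}$ produce the $c/\sqrt n$ rate. For $q = \infty$ the decomposition instead yields $\|\widehat X_n - \overline X_n\|_\infty = \max_i |\sigma(\widehat X_{n,i/n})|\sup_{t}|B^{(i)}_t|$, and a Cauchy--Schwarz split separates the uniformly $L_{2p}$-bounded $\sigma$-factor from the maximum of $n$ i.i.d.\ Brownian bridge suprema of scale $n^{-1/2}$; standard sub-Gaussian maximum estimates then contribute the expected factor $\sqrt{\log n}$, giving the $c\sqrt{\ln(n+1)}/\sqrt n$ rate. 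I do not anticipate a real obstacle beyond careful bookkeeping, since the substantive analytic input is Theorem~\ref{Thm1} together with classical moment estimates for Euler schemes of SDEs with linear-growth coefficients; everything else is the explicit Brownian-bridge identity for $\widehat X_n - \overline X_n$.
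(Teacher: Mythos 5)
Your proposal is correct and follows essentially the same route as the paper: the triangle inequality split, the bound $\|X-\widehat X_n\|_q\le\|X-\widehat X_n\|_\infty$ combined with Theorem~\ref{Thm1}, and the identity $\widehat X_{n,t}-\overline X_{n,t}=\sigma(\widehat X_{n,\utn})\cdot(W_t-\overline W_{n,t})$ reducing the interpolation defect to the piecewise linear approximation error of $W$. The only difference is that the paper simply cites the known bounds on $\bigl(\EE[\|W-\overline W_n\|_q^r]\bigr)^{1/r}$ (Speckman for $q<\infty$, Faure for $q=\infty$) where you re-derive them from Gaussian moment and sub-Gaussian maximum estimates, which is a valid but inessential variation.
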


\section{Proofs}\label{Proofs}\label{s4}
Throughout this section we put
\[
\utn = \lfloor n\cdot t\rfloor / n
\]
for every $n\in\N$ and every $t\in [0,1]$.

We briefly describe the structure of the proof of our main result, Theorem~\ref{Thm1}, and the relation of our analysis and the analysis of the Euler-Maruyama scheme carried out in~\cite{LS18}. 
Let  $p\in [1,\infty)$. 
In~\cite{LS18} a bijection $G\colon\R\to\R$ is constructed such that $G^{-1}$ is Lipschitz continuous and the stochastic process $Z=G\circ X$ is the unique strong solution of an SDE with Lipschitz continuous coefficients. It then follows by standard error estimates for the Euler-Maruyama scheme that 
there exist $c_1,c_2\in (0,\infty)$ such that for all $n\in\N$,
\begin{equation}\label{sk1}
\begin{aligned}
\bigl(\EE\bigl[\|X-\eul_n\|_\infty\|^p\bigr]\bigr)^{1/p} & \le c_1\cdot \bigl( \EE[\|Z-G\circ\eul_n\|_\infty^p]\bigr)^{1/p}\\
& \le c_2/\sqrt{n} +c_1\cdot \bigl(\EE\bigl[\|\widehat Z_n -G\circ\eul_n\|_\infty^p\bigr]\bigr)^{1/p},
\end{aligned}
\end{equation}
where $\widehat Z_n$ is the time-continous Euler-Maruyama scheme with step-size $1/n$ 
associated to the SDE
for the stochastic process $Z$. 
Using further regularity properties of the function $G$ it is shown in~\cite{LS18} that 
there exists $c\in (0,\infty)$ such that for all $n\in\N$,
\begin{equation}\label{sk2}
\bigl(\EE\bigl[\|\widehat Z_n -G\circ\eul_n\|_\infty^p\bigr]\bigr)^{1/p} \le c/\sqrt{n} + c\cdot \Bigl(\EE\Bigl[\Bigl|\int_0^1 \ind_B (\eul_{n,t},\eul_{n,\utn})\, dt\Bigr|^p\Bigr]\Bigr)^{1/p},
\end{equation}
where 
\[
B= \Bigl(\bigcup_{i=1}^{k+1} (\xi_{i-1},\xi_i)^2\Bigr)^c
\]
is the set of pairs
$(x,y)$ in $\R^2$, which do not allow 
for a joint Lipschitz estimate of $ |\mu(x) -\mu(y)|$ if $\mu$ has at least one discontinuity.
  Finally, using a large deviation argument it is shown in~\cite{LS18} that for every arbitrary small $\delta\in (0,1)$
there exists  $c\in (0,\infty)$ such that for all $n\in\N$,
\begin{equation}\label{sk3}
\Bigl(\EE\Bigl[\Bigl|\int_0^1 \ind_B (\eul_{n,t},\eul_{n,\utn})\, dt\Bigr|^p\Bigr]\Bigr)^{1/p} \le c\cdot n^{-(1-\delta)/(2p)}.
\end{equation}
Combining\eqref{sk1} to~\eqref{sk3} yields
the rate of convergence $1/(2p)-$ 
for the $p$-th root of the $p$-th mean of the maximum error of the time-continuous Euler-Maruyama scheme.

We add that in~\cite{LS18} it is assumed that the coefficients $\mu$ and $\sigma$ are bounded and the analysis is carried out only for $p=2$. However, it is straightforward to adapt the proof technique to the case of a general $p\in[1,\infty)$, and in~\cite[Remark 4.2]{NSS18} it is stated that the proof techniques in~\cite{LS18} could be modified to cover the case of unbounded coefficients $\mu$ and $\sigma$ as well. 

Our proof of Theorem~\ref{Thm1} follows the steps~\eqref{sk1} and~\eqref{sk2} but provides a much better estimate of the $p$-th mean occupation time of the set $B$ than~\eqref{sk3}, namely   
\begin{equation}\label{sk4}
\Bigl(\EE\Bigl[\Bigl|\int_0^1 \ind_B (\eul_{n,t},\eul_{n,\utn})\, dt\Bigr|^p\Bigr]\Bigr)^{1/p} \le c/\sqrt{n},
\end{equation}
which jointly with~\eqref{sk1} and~\eqref{sk2} yields the statement of Theorem~\ref{Thm1}. The estimate~\eqref{sk4} is, essentially, obtained by employing the Markov property of the time-continuous Euler-Maruyama scheme $\eul_n$ relative to the corresponding grid points $1/n, 2/n,\ldots ,1$,  by using appropriate estimates of the expected occupation time of a neighborhood of a non-zero $\xi\in\R$ of $\sigma$ by $\eul_n$ and by carrying out a 
detailed
analysis of the probability of a sign change of $\eul_{n,t}-\xi$ relative to the sign of  $\eul_{n,\utn}-\xi$. 

We briefly describe the 
structure
of this section. In Subsection~\ref{4.1} we provide $L_p$-estimates
of the solution $X$ and the time-continuous Euler-Maruyama scheme $\eul_n$. Subsection~\ref{4.2} provides the Markov property of $\eul_n$ and occupation time estimates for  $\eul_n$, which finally lead to the proof of the estimate~\eqref{sk4}, see Proposition~\ref{prop1}. Subsection~\ref{4.3} contains the construction of the transformation $G$ and provides the properties of $G$ needed to carry out steps~\eqref{sk1} and~\eqref{sk2}. The material presented in subsection~\ref{4.3} is essentially known from~\cite{LS15b}. The proof of Theorem~\ref{Thm1} is carried out in Subsection~\ref{4.4}. Subsection~\ref{4.5} contains the proof of Theorem~\ref{Thm2}.

Throughout the following we make use of the fact that the functions $\mu$ and $\sigma$ satisfy a linear growth condition, i.e. there exists $K\in(0, \infty)$ such that for all $x\in\R$, 
\begin{equation}\label{LG}
|\mu(x)|+|\sigma(x)|\leq K\cdot (1+|x|).
\end{equation}
This property is an immediate consequence of the assumptions (A1) and (A2).

\subsection{$L_p$-estimates of the solution  and the time-continuous Euler-Maruyama scheme}\label{4.1}
We have the following $L_p$-estimates for $X$, which follow from the linear growth property~\eqref{LG} of $\mu$ and $\sigma$ by using standard arguments as in~\cite[Sec.2.4]{Mao08}. 
\begin{lemma}\label{solprop}
Let $p\in[1, \infty)$. Then there exists  $c\in(0, \infty)$ such that for all $\delta\in[0,1]$ and all $t\in[0, 1-\delta]$,
\[
\bigl(\EE\bigl[\sup_{s\in[t, t+\delta]} |X(s)-X(t)|^p\bigr]\bigr)^{1/p}\leq c\cdot \sqrt{\delta}.
\]
In particular,
\[
\EE\bigl[\|X\|_\infty^p\bigr] < \infty.
\]
\end{lemma}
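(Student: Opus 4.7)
The plan is the standard textbook approach (as indicated by the citation to Mao's book): we exploit \emph{only} the linear growth bound \eqref{LG} on $\mu$ and $\sigma$, together with Minkowski/$c_p$-inequalities, H\"older's inequality, the Burkholder--Davis--Gundy (BDG) inequality, and Gronwall's lemma. The Lipschitz/piecewise-Lipschitz structure in (A1), (A2) plays no role beyond ensuring \eqref{LG}.

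\textbf{Step 1 (a priori moment bound).} I would first establish that $\EE[\sup_{s\in[0,1]}|X_s|^p]<\infty$, which in particular yields the "in particular" clause. Starting from
\[
X_s = x_0 + \int_0^s \mu(X_u)\,du + \int_0^s \sigma(X_u)\,dW_u,
\]
raise to the $p$-th power, take $\sup_{s\in[0,t]}$, and apply the $c_p$-inequality. H\"older's inequality in time handles the drift integral, and BDG handles the stochastic integral. Combined with \eqref{LG}, this gives a constant $c_1,c_2\in(0,\infty)$ such that
\[
\EE\bigl[\sup_{s\in[0,t]}|X_s|^p\bigr] \le c_1 + c_2\int_0^t \EE\bigl[\sup_{r\in[0,u]}|X_r|^p\bigr]\,du.
\]
A preliminary localization by the stopping times $\tau_N=\inf\{s\ge 0:|X_s|\ge N\}\wedge 1$ ensures the left-hand side is finite for each $N$, so Gronwall's lemma applies and monotone convergence ($N\to\infty$) yields the desired a priori bound.

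\textbf{Step 2 (H\"older estimate of order $1/2$).} For $t\in[0,1-\delta]$ and $s\in[t,t+\delta]$, decompose
\[
X_s-X_t = \int_t^s \mu(X_u)\,du + \int_t^s \sigma(X_u)\,dW_u.
\]
Taking $\sup_{s\in[t,t+\delta]}$, raising to the $p$-th power, and applying the $c_p$-inequality splits the estimate into a drift and a diffusion piece. On the drift piece I use H\"older to produce $\delta^{p-1}\int_t^{t+\delta}|\mu(X_u)|^p\,du$; on the diffusion piece I use BDG to produce $c_p\bigl(\int_t^{t+\delta}|\sigma(X_u)|^2\,du\bigr)^{p/2}$. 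Bounding $|\mu(X_u)|,|\sigma(X_u)|\le K(1+|X_u|)$ by \eqref{LG} and using Step 1 to control $\EE\bigl[(1+\sup_{u\in[0,1]}|X_u|)^p\bigr]\le c_3$, one obtains
\[
\EE\bigl[\sup_{s\in[t,t+\delta]}|X_s-X_t|^p\bigr] \le c_4\bigl(\delta^p + \delta^{p/2}\bigr) \le 2c_4\,\delta^{p/2},
\]
using $\delta\le 1$. Extracting $p$-th roots concludes the proof.

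Neither step is genuinely hard; the only point that requires minor care is the localization in Step 1, needed because Gronwall must be applied to a function already known to be finite. Once the a priori bound is in hand, Step 2 is mechanical.
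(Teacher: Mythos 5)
Your proposal is correct and is exactly the argument the paper has in mind: the paper gives no detailed proof of this lemma, stating only that it ``follows from the linear growth property \eqref{LG} of $\mu$ and $\sigma$ by using standard arguments as in \cite[Sec.\ 2.4]{Mao08}'', which is precisely your localization--Gronwall a priori bound followed by the BDG/H\"older increment estimate. No gaps; the only point of care (localizing before Gronwall) is one you already flag.
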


For technical reasons we have to provide $L_p$-estimates and some further properties
of the time-continuous Euler-Maruyama scheme for the SDE~\eqref{sde0} dependent on the initial value $x_0$. To be formally precise, for every $x\in\R$ we let 
$X^x$ denote the unique strong solution of the SDE
\begin{equation}\label{sde00}
\begin{aligned}
dX^x_t & = \mu(X^x_t) \, dt + \sigma(X^x_t) \, dW_t, \quad t\in [0,1],\\
X^x_0 & = x,
\end{aligned}
\end{equation}
and for all $x\in\R$ and $n\in\N$ we use $\eul_{n}^x=(\eul_{n,t}^x)_{t\in[0,1]}$ to denote the time-continuous Euler-Maruyama scheme with step-size $1/n$ associated to the SDE \eqref{sde00}, i.e. $\eul_{n,0}^x=x$ and
\[
\eul_{n,t}^x=\eul_{n,\utn}^x+\mu(\eul_{n,\utn}^x)\cdot (t-\utn)+\sigma(\eul_{n,\utn}^x)\cdot (W_t-W_{\utn})
\]
for $t\in [0,1]$. 
In particular, $X=X^{x_0}$ and $\widehat X_n = \eul_{n}^{x_0}$ for every $n\in\N$. Furthermore,   the integral representation
\begin{equation}\label{intrep}
\eul_{n,t}^x=x+\int_0^t \mu(\eul_{n,\usn}^x)\, ds+\int_0^t\sigma(\eul_{n,\usn}^x)\,dW_s
\end{equation}
holds for every $n\in\N$ and $t\in[0,1]$.

We have the following uniform $L_p$-estimates for $\eul_{n}^x$, $n\in\N$, which follow from \eqref{intrep} and the linear growth property~\eqref{LG} of $\mu$ and $\sigma$ by using standard arguments.

\begin{lemma}\label{eulprop}
Let $p\in[1, \infty)$. Then there exists  $c\in(0, \infty)$ such that for all $x\in\R$, all $n\in\N$, all $\delta\in[0,1]$ and all $t\in[0, 1-\delta]$,
\[
\bigl(\EE\bigl[\sup_{s\in[t, t+\delta]} |\eul_{n,s}^x-\eul_{n,t}^x|^p\bigr]\bigr)^{1/p}\leq c\cdot (1+|x|)\cdot \sqrt{\delta}.
\]
In particular,
\[
\sup_{n\in\N} \bigl(\EE\bigl[\|\eul_{n}^x\|_\infty^p\bigr]\bigr)^{1/p}\leq c\cdot (1+|x|).
\]
\end{lemma}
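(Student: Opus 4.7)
The plan is to follow the standard two-step approach for SDE moment estimates: first establish a uniform-in-$n$ $L_p$-bound on the supremum of the scheme, then localize to the small interval $[t, t+\delta]$ using the integral representation~\eqref{intrep} together with the Burkholder-Davis-Gundy (BDG) inequality, H\"older's inequality, and the linear growth property~\eqref{LG}. The second assertion of the lemma is immediate from the first on taking $t=0$ and $\delta=1$, but it is simpler to prove the uniform bound first and then use it when estimating increments.

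First I would establish the a priori bound $\sup_{n\in\N} \EE\bigl[\sup_{s\in[0,1]}|\eul_{n,s}^x|^p\bigr] \le c(1+|x|)^p$. Starting from~\eqref{intrep}, the elementary inequality $|a+b+c|^p \le 3^{p-1}(|a|^p+|b|^p+|c|^p)$, H\"older applied to the drift integral, and BDG applied to the stochastic integral yield
\[
\EE\bigl[\sup_{u\in[0,s]}|\eul_{n,u}^x|^p\bigr] \le c\Bigl(|x|^p + \int_0^s \EE\bigl[|\mu(\eul_{n,\uun}^x)|^p\bigr]\,du + \EE\Bigl[\Bigl(\int_0^s |\sigma(\eul_{n,\uun}^x)|^2\,du\Bigr)^{p/2}\Bigr]\Bigr).
\]
Invoking~\eqref{LG} on both error terms, and applying a further H\"older step to the $p/2$-moment of the quadratic variation to reduce it to $\int_0^s \EE[1+|\eul_{n,\uun}^x|^p]\,du$, one obtains
\[
\EE\bigl[\sup_{u\in[0,s]}|\eul_{n,u}^x|^p\bigr] \le c(1+|x|^p) + c\int_0^s \EE\bigl[\sup_{v\in[0,u]}|\eul_{n,v}^x|^p\bigr]\,du,
\]
where we used $|\eul_{n,\uun}^x| \le \sup_{v\in[0,u]}|\eul_{n,v}^x|$. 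A standard localization via stopping times $\tau_N = \inf\{t\in[0,1] \colon |\eul_{n,t}^x|>N\}$, followed by Gronwall's lemma and monotone convergence as $N\to\infty$, then yields the claimed uniform bound with a constant depending only on $p$ and $K$.

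With the uniform bound in hand, fix $t\in[0,1-\delta]$ and note that for $h\in[0,\delta]$ the representation~\eqref{intrep} gives
\[
\eul_{n,t+h}^x - \eul_{n,t}^x = \int_t^{t+h}\mu(\eul_{n,\usn}^x)\,ds + \int_t^{t+h}\sigma(\eul_{n,\usn}^x)\,dW_s.
\]
For the drift, H\"older yields $\sup_{h\in[0,\delta]}\bigl|\int_t^{t+h}\mu(\eul_{n,\usn}^x)\,ds\bigr|^p \le \delta^{p-1}\int_t^{t+\delta}|\mu(\eul_{n,\usn}^x)|^p\,ds$, and taking expectations and applying~\eqref{LG} together with the uniform bound gives a contribution bounded by $c\delta^p (1+|x|)^p \le c\delta^{p/2}(1+|x|)^p$ since $\delta\le 1$ and $p\ge 1$. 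For the stochastic integral, BDG followed by a further H\"older application (when $p\ge 2$) or a direct domination of the integrand by $\delta^{p/2}\sup_{s}|\sigma(\eul_{n,\usn}^x)|^p$ (when $p<2$), combined with~\eqref{LG} and the uniform bound, produces a bound of the form $c\delta^{p/2}(1+|x|)^p$. Summing both contributions and taking $p$-th roots gives the desired estimate.

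No step should be a serious obstacle. The argument is entirely parallel to the one for SDEs with globally Lipschitz coefficients because the only property of $\mu$ and $\sigma$ that actually enters is the linear growth~\eqref{LG}, which holds under (A1)--(A2). The only bookkeeping worth mentioning is the stopping-time localization needed to justify Gronwall rigorously in the absence of any a priori finiteness assumption on the moments, and the split between $p\ge 2$ and $p<2$ in the final BDG step, both of which are routine.
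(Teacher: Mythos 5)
Your proof is correct and is precisely the ``standard argument'' the paper invokes without writing out: the paper gives no proof of this lemma, stating only that it follows from the integral representation~\eqref{intrep} and the linear growth bound~\eqref{LG}, and your BDG/H\"older/Gronwall chain (with the routine stopping-time localization and the $p\ge 2$ versus $p<2$ bookkeeping) is exactly that standard derivation.
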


\subsection{A Markov property and occupation time estimates for the time-continuous Euler-Maruyama scheme}\label{4.2}

The following lemma provides a Markov property of the time-continuous Euler-Maruyama 
scheme
$\eul^x_n$ relative to the gridpoints $1/n,2/n,\ldots,1$.

\begin{lemma}\label{markov}
For all $x\in\R$, all $n\in\N$, all $j\in\{0, \ldots, n-1\}$ and $\PP ^{\eul_{n,j/n}^x} $-almost all $y\in\R$ we have
\[
\PP ^{(\eul_{n,t}^x)_{t\in [j/n, 1]}|\mathcal F_{j/n}}=\PP ^{(\eul_{n,t}^x)_{t\in [j/n, 1]}|\eul_{n,j/n}^x}
\]
as well as
\[
\PP ^{(\eul_{n,t}^x)_{t\in [j/n, 1]}|\eul_{n,j/n}^x=y}=\PP ^{(\eul^y_{n,t})_{t\in [0,1-j/n]}}.
\]
\end{lemma}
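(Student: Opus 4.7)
The plan is to exploit the explicit recursive structure of the Euler–Maruyama scheme to exhibit $(\widehat X_{n,t}^x)_{t\in[j/n,1]}$ as a measurable functional of the ``past'' random variable $\widehat X_{n,j/n}^x$ and a ``future'' Brownian motion that is independent of $\mathcal{F}_{j/n}$. Specifically, for fixed $n\in\N$ and $j\in\{0,\dots,n-1\}$, put $\widetilde W_s := W_{s+j/n}-W_{j/n}$ for $s\in[0,1-j/n]$. By the independent increments property of $W$, the process $\widetilde W$ is a Brownian motion on $[0,1-j/n]$ that is independent of $\mathcal{F}_{j/n}$. First I would argue, by induction on the grid index $i \in \{j,\dots,n-1\}$ and inspection of the Euler--Maruyama recursion, that there exists a (deterministic) Borel measurable map $\Psi\colon \R \times C([0,1-j/n];\R) \to C([j/n,1];\R)$ such that
\[
\bigl(\widehat X_{n,t}^x\bigr)_{t\in[j/n,1]} = \Psi\bigl(\widehat X_{n,j/n}^x,\,\widetilde W\bigr).
\]
Indeed, on $[j/n,(j+1)/n]$ the claim is immediate from the definition of $\widehat X_{n,\cdot}^x$, and the inductive step on $[(i)/n,(i+1)/n]$ uses only the value $\widehat X_{n,i/n}^x$ (which by induction is a function of $\widehat X_{n,j/n}^x$ and $\widetilde W|_{[0,i/n-j/n]}$) together with the increment $W_t - W_{i/n} = \widetilde W_{t-j/n} - \widetilde W_{i/n-j/n}$.

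Once this representation is in hand, both conclusions follow from the standard factorization lemma for conditional distributions: if $U$ is $\mathcal{G}$-measurable, $V$ is independent of $\mathcal{G}$, and $F$ is jointly measurable, then the conditional distribution $\PP^{F(U,V)\,|\,\mathcal{G}}$ coincides $\PP$-almost surely with $\PP^{F(U,V)\,|\,U}$, and a regular version is given by $y\mapsto \PP^{F(y,V)}$. Applied with $\mathcal{G}=\mathcal{F}_{j/n}$, $U=\widehat X_{n,j/n}^x$, $V=\widetilde W$ and $F=\Psi$, this immediately yields the first displayed identity of the lemma and shows that a regular conditional distribution of $(\widehat X_{n,t}^x)_{t\in[j/n,1]}$ given $\widehat X_{n,j/n}^x = y$ is the law of $\Psi(y,\widetilde W)$.

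It remains to identify $\Psi(y,\widetilde W)$ in law with $(\widehat X_{n,t}^y)_{t\in[0,1-j/n]}$. For this I would simply observe that, by the very same recursion used to derive $\Psi$, the process $\Psi(y,\widetilde W)(\cdot)$ restricted to $[j/n,1]$ and then re-indexed by $s=t-j/n$ satisfies the Euler–Maruyama recursion of step-size $1/n$ started at $y$ and driven by $\widetilde W$ on $[0,1-j/n]$. Since $\widetilde W \stackrel{d}{=} W|_{[0,1-j/n]}$, this process has the same distribution as $(\widehat X_{n,s}^y)_{s\in[0,1-j/n]}$, giving the second identity. The only mildly delicate point is keeping track of the time shift by $j/n$ and verifying that the grid of the shifted scheme lines up with $0,1/n,\dots,(n-j)/n$, which is immediate because the original grid contains $j/n$. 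No genuine obstacle is anticipated; the argument is essentially a bookkeeping exercise in the functional representation of the scheme.
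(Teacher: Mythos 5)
Your proposal is correct and follows essentially the same route as the paper: the paper's proof likewise rests on the observation that the time-shifted scheme is a fixed measurable functional of its value at the grid point $j/n$ and of the shifted Brownian increments, with the same functional serving for the unshifted scheme started at $y$, and then invokes the standard factorization of conditional distributions. You merely spell out the induction and the factorization lemma that the paper leaves implicit.
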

\begin{proof}
The lemma is an immediate consequence of the fact that, by definition of $\eul_n^x$, for every $\ell\in \{1,\ldots,n\}$  there exists a mapping $\psi\colon \R\times C([0,\ell/n]) \to C([0,\ell/n]) $ such that for all $x\in\R$ and all $i\in\{0,1,\ldots,n-\ell\}$,
\[
(\eul^x_{n,t+i/n})_{t\in[0, \ell/n]} = \psi\bigl(\eul^x_{n, i/n},(W_{t+i/n}-W_{i/n})_{t\in[0, \ell/n]}\bigr).\qedhere
\]
\end{proof}

Next, we provide an estimate for the expected occupation time  of a neighborhood of   a non-zero of $\sigma$ by the time-continuous Euler-Maruyama scheme $\eul_{n}^x$.

\begin{lemma}\label{occup}
Let $\xi\in\R$ satisfy $\sigma(\xi)\not=0$. Then there exists $ c\in (0, \infty)$ such that for   all $x\in\R$, all $n\in\N$ and all $\eps\in (0,\infty)$,
\begin{equation}
\int_0^1  \PP(\{|\eul_{n,t}^x-\xi|\leq \varepsilon\})\,dt\leq c\cdot (1+x^2)\cdot\Bigl(\varepsilon+\frac{1}{\sqrt n}\Bigr). 
\end{equation}
\end{lemma}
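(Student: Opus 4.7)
My approach is to use the semi-martingale occupation time formula for $\eul_n^x$ together with the non-degeneracy $\sigma(\xi) \neq 0$, and to control the expected local time via Tanaka's formula and Lemma~\ref{eulprop}.

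Let $L$ be a Lipschitz constant of $\sigma$, set $s_0 := |\sigma(\xi)|/2$ and $\delta_0 := s_0/L$, so that $|y - \xi| < \delta_0$ implies $|\sigma(y)| \geq s_0$; equivalently, $\{|\sigma(y)| < s_0\} \subseteq \{|y - \xi| \geq \delta_0\}$. The case $\eps > \delta_0/2$ is trivial since the left-hand side is then at most $1 \leq (2/\delta_0)\eps$, so I would assume $\eps \leq \delta_0/2$. The elementary bound $1 \leq \sigma(y)^2/s_0^2 + \ind_{\{|\sigma(y)| < s_0\}}$, applied at $y = \eul_{n,\usn}^x$, yields the decomposition
\begin{equation*}
\ind_{\{|\eul_{n,s}^x - \xi| \leq \eps\}} \leq \frac{\sigma(\eul_{n,\usn}^x)^2}{s_0^2}\ind_{\{|\eul_{n,s}^x - \xi| \leq \eps\}} + \ind_{\{|\eul_{n,s}^x - \xi| \leq \eps,\, |\eul_{n,\usn}^x - \xi| \geq \delta_0\}},
\end{equation*}
which I would integrate in $s \in [0,1]$ and bound in expectation.

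On the event appearing in the second indicator, $|\eul_{n,s}^x - \eul_{n,\usn}^x| \geq \delta_0/2$, so Chebyshev's inequality combined with Lemma~\ref{eulprop} (applied with $p=2$ and $\delta = 1/n$) bounds its probability by $c(1+x^2)/n$, and the second term contributes at most $c(1+x^2)/\sqrt n$ after integration. For the first term, since the quadratic variation of $\eul_n^x$ satisfies $d\langle\eul_n^x\rangle_s = \sigma(\eul_{n,\usn}^x)^2\,ds$, the occupation time formula for the continuous semi-martingale $\eul_n^x$ produces
\begin{equation*}
\int_0^1 \sigma(\eul_{n,\usn}^x)^2 \ind_{\{|\eul_{n,s}^x - \xi| \leq \eps\}}\,ds = \int_{\xi-\eps}^{\xi+\eps} L_1^u(\eul_n^x)\,du,
\end{equation*}
where $L^u$ denotes semi-martingale local time at level $u$. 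Taking expectation gives a bound of $2\eps \sup_{|u-\xi| \leq \eps}\EE[L_1^u(\eul_n^x)]$.

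To estimate $\EE[L_1^u(\eul_n^x)]$ I would apply Tanaka's formula to the continuous semi-martingale $\eul_n^x - u$: after taking expectation the stochastic integral vanishes (the linear growth of $\sigma$ together with Lemma~\ref{eulprop} ensures the required integrability), and the drift contribution is controlled by the linear growth of $\mu$ and Lemma~\ref{eulprop}, yielding $\EE[L_1^u(\eul_n^x)] \leq c(1 + |x| + |u|)$. For $|u - \xi| \leq \delta_0/2$ this is at most $c'(1+|x|) \leq c''(1+x^2)$. Hence the first term contributes at most $C(1+x^2)\eps$, and combined with the second term I obtain the claimed bound $c(1+x^2)(\eps + 1/\sqrt n)$. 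The main obstacle is finding the right decomposition: once one couples the non-degeneracy $|\sigma(y)| \geq s_0$ with the quadratic variation of $\eul_n^x$ through the factor $\sigma(\eul_{n,\usn}^x)^2/s_0^2$, the occupation time formula converts the question into an expected local time bound that Tanaka's formula handles in a routine way.
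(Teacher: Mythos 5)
Your proof is correct and follows essentially the same route as the paper: the key ingredients are identical, namely a uniform bound $\EE[L^u_1(\eul^x_n)]\le c\,(1+|x|)$ obtained from Tanaka's formula together with the occupation time formula applied to the quadratic variation $d\langle \eul^x_n\rangle_s=\sigma^2(\eul^x_{n,\usn})\,ds$, which turns the non-degenerate part into an $O(\eps)$ contribution. The only (harmless) deviation is in how the mismatch between $\sigma^2$ at the continuous time point and at the grid point is handled: the paper bounds $\EE\int_0^1|\sigma^2(\eul^x_{n,t})-\sigma^2(\eul^x_{n,\utn})|\,dt$ by $c\,(1+x^2)\,n^{-1/2}$ via the Lipschitz continuity of $\sigma$, whereas you split on the event $\{|\eul^x_{n,\usn}-\xi|\ge\delta_0\}$ and control it by Chebyshev's inequality, which even yields order $n^{-1}$.
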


\begin{proof}
Let  $x\in\R$ and $n\in\N$. 
By~\eqref{intrep},
\eqref{LG}
and Lemma~\ref{eulprop} we see that $\eul_n^x$ is a continuous semi-martingale with quadratic variation
\begin{equation}\label{qv}
\langle \eul_{n}^x\rangle_t
=x^2+\int_0^t \sigma^2\bigl(\eul_{n,\usn}^x\bigr)\, ds,\quad t\in[0,1].
\end{equation}
For $a\in\R$ let $L^a(\eul_n^x) = (L^a_t(\eul_n^x))_{t\in[0,1]}$ denote the local time 
of $\eul_{n}^x$ at the point $a$.
Thus, 
for all $a\in\R$ and
 all $t\in[0,1]$, 
\begin{align*}
|\eul_{n,t}^x-a| & = |x-a| + \int_0^t \sgn(\eul_{n,s}^x-a)\cdot \mu (\eul_{n,s}^x)\, ds + \int_0^t \sgn(\eul_{n,s}^x-a)\cdot \sigma (\eul_{n,s}^x)\, dW_s + L^a_t(\eul_n^x),
\end{align*}
where $\sgn(z) = 1_{(0,\infty)}(z) - 1_{(-\infty,0]}(z)$ for $z\in\R$,
see, e.g.~\cite[Chap. VI]{RevuzYor2005}.
Hence, 
for all $a\in\R$ and
 all $t\in[0,1]$,
\begin{align*}
L^a_t(\eul_n^x) & \le |\eul^x_{n,t}-x| + \int_0^t |\mu (\eul_{n,s}^x)|\, ds + \Bigl|\int_0^t \sgn(\eul_{n,s}^x-a)\cdot \sigma (\eul_{n,s}^x)\, dW_s\Bigr|.
\end{align*}

Using the H\"older inequality, the Burkholder-Davis-Gundy inequality, \eqref{LG} and the second estimate in Lemma~\ref{eulprop} we conclude that 
there exists $c\in (0,\infty)$ such that
  for all $x\in\R$, all $n\in\N$,
all $a\in\R$ 
   and all $t\in[0,1]$,
\begin{equation}\label{local1}
\EE\bigl[L^a_t(\eul_n^x)\bigr]  \le  c\cdot (1+|x|).
\end{equation}
Let $\varepsilon \in (0,\infty)$.
Using~\eqref{qv} and~\eqref{local1} we obtain by the occupation 
time
formula that
there exists $c\in (0,\infty)$ such that  
for all $x\in\R$, all $n\in\N$ and all $\eps\in (0,\infty)$,
\begin{equation}\label{local2}
  \EE\Bigl[\int_0^1 1_{[\xi-\eps,\xi+\eps]}(\eul^x_{n,t})\cdot \sigma^2(\eul_{n,\utn}^x)\, dt\Bigr]
 = \int_{\R}1_{[\xi-\eps,\xi+\eps]}(a)\, \EE\bigl[L^a_t(\eul_n^x)\bigr]\, da 
 \le c\cdot (1+|x|)\cdot \eps. 
\end{equation}

Using the Lipschitz continuity of $\sigma$ as well as~\eqref{LG} and Lemma~\ref{eulprop} 
we obtain that 
there exist  $c_1,c_2\in (0,\infty)$ such that 
for all $x\in\R$ and all $n\in\N$,
\begin{equation}\label{local3} 
\begin{aligned}
  \EE\Bigl[\int_0^1\bigl|\sigma^2(\eul^x_{n,t})-\sigma^2(\eul^x_{n,\utn})\bigr|\, dt \Bigr]
 & \le c_1 \cdot \int_0^1\EE\bigl[|\eul^x_{n,t}-\eul^x_{n,\utn}|\cdot (1 + \|\eul^x_{n}\|_\infty)\bigr]\, dt \\ & 
  \le c_2 \cdot (1+x^2)\cdot \frac{1}{\sqrt n}.
\end{aligned}
\end{equation}

Since $\sigma$ is continuous and $\sigma(\xi)\neq 0$ there exist  $\kappa,\eps_0\in(0,\infty)$ such that 
\[
\inf_{|z-\xi|< \eps_0}\sigma^2(z) \ge \kappa.
\]
Observing~\eqref{local2} and~\eqref{local3} we conclude that there exists $c\in (0,\infty)$ such that for all $x\in\R$, all $n\in\N$ and all $\eps \in (0,\eps_0]$,
\begin{align*}
 \int_0^1  \PP(\{|\eul_{n,t}^x-\xi|\leq \varepsilon\})\,dt  &  = \frac{1}{\kappa}\cdot \EE\Bigl[\int_0^1 \kappa\cdot 1_{[\xi-\eps,\xi+\eps]}(\eul^x_{n,t})\, dt\Bigr] \\ &  \le \frac{1}{\kappa}\cdot \EE\Bigl[\int_0^1 1_{[\xi-\eps,\xi+\eps]}(\eul^x_{n,t})\cdot  \sigma^2(\eul^x_{n,t})\, dt\Bigr] \\
& \le  \frac{1}{\kappa}\cdot \EE\Bigl[\int_0^1\bigl( 1_{[\xi-\eps,\xi+\eps]}(\eul^x_{n,t})\cdot  \sigma^2(\eul^x_{n,\utn}) + \bigl|\sigma^2(\eul^x_{n,t})-\sigma^2(\eul^x_{n,\utn})\bigr|\bigr)\, dt\Bigr]\\
&  \le  \frac{c}{\kappa}\cdot (1+|x|+x^2)\cdot \Bigl( \eps + \frac{1}{\sqrt n}\Bigr),
\end{align*}  
which completes the proof of the lemma.
\end{proof}

The following result shows how to  transfer  the condition of a sign change of $\eul_n -\xi$ at time $t$ relative to its sign at the grid point $\utn$ to a condition on the distance of $\eul_n$ and $\xi$ at the time $\utn-(t-\utn)$. 

\begin{lemma}\label{central}
Let $\xi\in\R$. Then there exists $c\in(0, \infty)$ such that for all $n\in\N$, all $0\le s\le t \le 1$ with $\utn-s \ge 1/n$ and all $A\in \F_s$,
\begin{equation}\label{central1}
\begin{aligned}
& \PP\bigl(A \cap \{(\widehat X_{n,t}-\xi)\cdot( \widehat X_{n,\utn}-\xi)\le 0\}\bigr)\\
& \qquad\qquad \le \frac{c}{n}\cdot \PP(A) + c\cdot \int_{\R} \PP\bigl(A \cap \bigl\{|\widehat X_{n, \utn-(t-\utn)}-\xi| \le \tfrac{c}{\sqrt{n}} (1+|z|)\bigr\}\bigr)\cdot e^{-\frac{z^2}{2}}\, dz.
\end{aligned}
\end{equation}
\end{lemma}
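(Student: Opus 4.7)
The plan is to introduce the three independent Brownian increments $B_1 = W_{\utn - h} - W_{(m-1)/n}$, $B_2 = W_\utn - W_{\utn - h}$, $B_3 = W_t - W_\utn$ with $h = t - \utn$ and $\utn = m/n$, and to abbreviate $E = \{(\widehat X_{n,t} - \xi)(\widehat X_{n,\utn} - \xi) \le 0\}$ and $R = |\widehat X_{n,\utn - h} - \xi|$. The hypothesis $\utn - s \ge 1/n$ gives $A \in \F_s \subseteq \F_{(m-1)/n}$, so writing $y = \widehat X_{n,(m-1)/n}$ the triple $(A, y, \widehat X_{n,\utn - h})$ is $\F_{\utn - h}$-measurable, and $(B_2, B_3)$, being increments of $W$ after $\utn - h$, is independent of it. From the sign-change condition and the piecewise explicit form of the Euler scheme I would extract a pointwise bound on $R$, and then convert that bound into a single Gaussian integral via a max-of-normals argument.

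The first step is to observe that since $\widehat X_{n,\utn} - \widehat X_{n,\utn - h} = \mu(y)h + \sigma(y)B_2$ and $\widehat X_{n,t} - \widehat X_{n,\utn} = \mu(\widehat X_{n,\utn})h + \sigma(\widehat X_{n,\utn})B_3$, the opposite-sign condition defining $E$ forces $|\widehat X_{n,\utn} - \xi| \le |\widehat X_{n,t} - \widehat X_{n,\utn}|$, so by the triangle inequality and the linear growth of $\mu$ and $\sigma$,
\[
R \le K(1+|\widehat X_{n,\utn}|)(h + |B_3|) + K(1+|y|)(h + |B_2|)
\]
on $E$. The random prefactors $(1+|\widehat X_{n,\utn}|)$ and $(1+|y|)$ are then handled on a good event $G$ cutting off $|B_1|, |B_2|, |B_3|, |B_1+B_2|$ and $h + |B_2| + |B_3|$ at constants depending only on $K$. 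Choosing the thresholds small enough (and $n$ large enough that $K/n$ is negligible, small $n$ being absorbed into the final constant), the Euler step from $(m-1)/n$ to $\utn$ gives $(1+|\widehat X_{n,\utn}|) \le 2(1+|y|)$ and the Euler step from $(m-1)/n$ to $\utn - h$ gives $(1+|y|) \le 2(1+|\widehat X_{n,\utn - h}|) \le 2(1+|\xi|) + 2R$. Substituting back and solving the resulting linear inequality in $R$ gives
\[
R \le 12K(1+|\xi|)(h + |B_2| + |B_3|) \le 2C_\xi(1+W)/\sqrt n
\]
on $E \cap G$, where $C_\xi = 12K(1+|\xi|)$, $Z_j = B_j/\sqrt h \sim N(0,1)$, and $W = \max(|Z_2|, |Z_3|)$.

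Since $G$ depends only on increments of $W$ past $(m-1)/n$ it is independent of $A$, and standard Gaussian tail estimates on each constituent event give $\PP(A \cap G^c) = \PP(A)\PP(G^c) \le c\PP(A)/n$. For the good-event piece, the independence of $(Z_2, Z_3)$ from $(A, \widehat X_{n,\utn - h})$ together with Fubini yields
\[
\PP(A \cap \{R \le 2C_\xi(1+W)/\sqrt n\}) = \int_0^\infty \PP(A \cap \{R \le 2C_\xi(1+w)/\sqrt n\}) f_W(w)\, dw,
\]
and the elementary density bound $f_W(w) = 4(2\Phi(w)-1)\phi(w) \le 4\phi(w)$ for $w \ge 0$, combined with the symmetry of $\phi$, converts this into twice the corresponding integral over $z \in \R$, producing the right-hand side of~\eqref{central1} with a sufficiently large constant $c$.

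The main obstacle is the treatment of the random coefficients $(1+|\widehat X_{n,\utn}|)$ and $(1+|y|)$ that appear on applying linear growth: one cannot simply replace them by $1 + \|\widehat X_n\|_\infty$ because that global quantity is not independent of $A$, and the required bound $c\PP(A)/n$ on the error term would be destroyed. They must therefore be bounded on a Brownian-increment good event $G$ whose independence of $A$ is guaranteed by $A \in \F_{(m-1)/n}$, and the bound on $R$ the procedure produces is circular ($R$ appearing on both sides) and can only be solved once the Brownian increments are confined to the smallness region carved out by $G$.
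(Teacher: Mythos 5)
Your argument is correct and is essentially the paper's own proof: you isolate the same three Brownian increments, use the sign change to bound $|\widehat X_{n,\utn}-\xi|$ by one Euler increment, propagate this back to time $\utn-(t-\utn)$, control the random linear-growth prefactors on a good event independent of $A\in\F_{\utn-1/n}$, and solve the resulting self-referential inequality to get the bound $12K(1+|\xi|)(h+|B_2|+|B_3|)$ that also appears in the paper. The only (immaterial) differences are cosmetic: you truncate the raw increments at constants and reduce to one Gaussian via the density of $\max(|Z_2|,|Z_3|)$, whereas the paper truncates the normalized increments at $\sqrt{2\ln n}$ and uses rotation invariance of $(Z_1,Z_2)$; you should also note the trivial case $t=\utn$, where your normalization $Z_j=B_j/\sqrt{h}$ is undefined.
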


\begin{proof}
Choose $K\in (0,\infty)$ according to~\eqref{LG} and choose $n_0\in\N\setminus\{1\}$ 
such that for all $n \geq n_0$,
\[
12K\cdot (1+|\xi|) \cdot\frac{1+\sqrt{2\ln(n)}}{\sqrt{n}} \le \frac{1}{2}.
\]
Without loss of generality we may assume that $n\ge n_0$.
Let $ 0\le s\le t \le 1$ with $\utn-s \ge 1/n$ and let $A\in \F_s$.
If $t=\utn$ then for all $c\in (0,\infty)$ and all $z\in\R$ we have
\[
 \{(\widehat X_{n,t}-\xi)\cdot (\widehat X_{n,\utn}-\xi)\le 0\} =  \{\widehat X_{n,\utn}-\xi = 0\} \subset  \bigl\{|\widehat X_{n,\utn-(t-\utn)}-\xi| \le \tfrac{c}{\sqrt{n}} (1+|z|)\bigr\},
\]
which implies that in this case~\eqref{central1} holds for all $c\ge 1/\sqrt{2\pi}$.

Now assume that $t > \utn$ and put 
\[
Z_1 = \frac{W_t-W_{\utn}}{\sqrt{t-\utn}},\quad Z_2 = \frac{W_{\utn}-W_{\utn-(t-\utn)}}{\sqrt{t-\utn}},\quad Z_3=\frac{W_{\utn-(t-\utn)}-W_{\utn-1/n} }{\sqrt{1/n-(t-\utn)}}.
\]
Below we show that
\begin{equation}\label{central2}
\begin{aligned}
& \bigl\{(\widehat X_{n,t}-\xi)\cdot (\widehat X_{n,\utn}-\xi)\le 0\bigr\}\cap \bigl\{\max_{i\in\{1,2,3\}}|Z_i| \le \sqrt{2\ln(n)}\bigr\}  \\
& \qquad\qquad \subset \bigl\{|\widehat X_{n, \utn-(t-\utn)}-\xi| \le 12K\cdot (1+|\xi|)\cdot (1+|Z_1|+|Z_2|)/\sqrt{n}\bigr\}. 
\end{aligned}
\end{equation}
Note that $Z_1,Z_2,Z_3$ are independent and identically  distributed standard normal random variables. Moreover, $(Z_1,Z_2,Z_3)$ is independent of $\F_s$ since $s \le \utn-1/n$, $(Z_1,Z_2)$ is independent of $\F_{\utn-(t-\utn)}$ and $\widehat X_{n, \utn-(t-\utn)}$ is $\F_{\utn-(t-\utn)}$-measurable. Using the latter facts jointly with~\eqref{central2} and a standard estimate of standard normal tail probabilities we  obtain that
\begin{align*}
& \PP\bigl(A \cap \{(\widehat X_{n,t}-\xi)\cdot( \widehat X_{n, \utn}-\xi)\le 0\}\bigr)\\
& \quad \le \PP\bigl(A\cap \{|\widehat X_{n, \utn-(t-\utn)}-\xi| \le 12K\cdot (1+|\xi|)\cdot (1+|Z_1|+|Z_2|)/\sqrt{n}\}\bigr) \\
&\qquad \qquad \qquad   + \PP\bigl(A\cap \bigl\{\max_{i\in\{1,2,3\}}|Z_i| > \sqrt{2\ln(n)}\bigr\}\bigr)\\
&  \quad  \le \frac{2}{\pi}\int_{[0,\infty)^2} \PP\bigl(A\cap\bigl\{|\widehat X_{n,\utn-(t-\utn)}-\xi| \le 12K\cdot (1+|\xi|)\cdot \tfrac{ 1+z_1+z_2}{\sqrt{n}}\bigr\}\bigr)\cdot e^{-\frac{z_1^2+z_2^2}{2}}\, d(z_1,z_2)\\
&\qquad \qquad \qquad   +6\PP(A)\cdot \PP\bigl(\{Z_1 > \sqrt{2\ln(n)}\}\bigr)\\
&  \quad  \le \frac{2}{\pi}\int_{\R^2} \PP\Bigl(A\cap\Bigl\{|\widehat X_{n,\utn-(t-\utn)}-\xi| \le 12\sqrt{2}K\cdot (1+|\xi|)\cdot \tfrac{ 1+|\tfrac{z_1+z_2}{\sqrt{2}}|}{\sqrt{n}}\Bigr\}\Bigr)\cdot e^{-\frac{z_1^2+z_2^2}{2}}\, d(z_1,z_2)\\
&\qquad \qquad \qquad   +\frac{6\PP(A)}{\sqrt{2\pi\cdot 2\ln(n)}\cdot n}\\
&  \quad  = \frac{4}{\sqrt{2\pi}}\int_{\R} \PP\bigl(A\cap\bigl\{|\widehat X_{n,\utn-(t-\utn)}-\xi| \le 12\sqrt{2}K\cdot (1+|\xi|)\cdot \tfrac{ 1+|z|}{\sqrt{n}}\bigr\}\bigr)\cdot e^{-\frac{z^2}{2}}\, dz + \frac{3\PP(A)}{\sqrt{\pi \ln(n)}\cdot n},
\end{align*}
which yields~\eqref{central1}.

It remains to prove the inclusion~\eqref{central2}. To this end let $\omega\in\Omega$ and assume that
\begin{equation}\label{central3}
(\widehat X_{n,t}(\omega)-\xi)\cdot (\widehat X_{n,\utn}(\omega)-\xi)\le 0\text{\quad and \quad} \max_{i\in\{1,2,3\}}|Z_i(\omega)| \le \sqrt{2\ln(n)}.
\end{equation}
Using ~\eqref{LG}  and the fact that for all $a,b\in\R$,
\begin{equation}\label{JJJ}
1+|a|\leq (1+|a-b|)\cdot (1+|b|),
\end{equation}
we obtain
\begin{equation}\label{LLL}
\begin{aligned}
|\eul_{n,\utn}(\omega) -\xi| & \le |(\eul_{n,\utn}(\omega) -\xi) - (\eul_{n,t}(\omega) -\xi)| \\
&  = |\mu(\eul_{n,\utn}(\omega))\cdot (t-\utn) + \sigma(\eul_{n,\utn}(\omega))\cdot \sqrt{t-\utn}\cdot Z_1(\omega)|\\
& \le K\cdot(1+|\eul_{n,\utn}(\omega)|)\cdot\Bigl(\frac{1}{n} +\frac{1}{\sqrt{n}}\cdot |Z_1(\omega)|\Bigr)\\
& \le (1+|\eul_{n,\utn}(\omega)-\xi|)\cdot K\cdot(1+|\xi|)\cdot \frac{1}{\sqrt{n}}\cdot (1+|Z_1(\omega)|).
\end{aligned}
\end{equation}
Since $n \geq n_0$ 
we have  
\[
K\cdot(1+|\xi|)\cdot \frac{1}{\sqrt{n}}\cdot (1+|Z_1(\omega)|)\le  K\cdot (1+|\xi|)\cdot \frac{1+ \sqrt{2\ln(n)}}{\sqrt{n}} \leq \frac{1}{2},
\]
and therefore,
\begin{equation}\label{central4}
|\eul_{n,\utn}(\omega)-\xi|\leq \frac{K\cdot  (1+|\xi|)\cdot\tfrac{1 }{\sqrt n}\cdot (1+|Z_1(\omega)|)}{1-K\cdot (1+|\xi|)\cdot\tfrac{1 }{\sqrt n}\cdot (1+|Z_1(\omega)|)}\leq 2K\cdot  (1+|\xi|)\cdot\frac{1}{\sqrt n}\cdot (1+|Z_1(\omega)|).
\end{equation}
Similarly to \eqref{LLL}, we obtain by \eqref{LG} and \eqref{JJJ} that
\begin{equation}\label{central5}
|\eul_{n,\utn}(\omega)-\eul_{n,\utn-(t-\utn)}(\omega)| \le (1+|\eul_{n,\utn-1/n}(\omega)-\xi|)\cdot K\cdot (1+|\xi|)\cdot \frac{1}{\sqrt{n}}\cdot (1+|Z_2(\omega)|)
\end{equation} 
and 
\begin{equation}\label{central6}
|\eul_{n,\utn-(t-\utn)}(\omega)-\eul_{n,\utn-1/n}(\omega)| \le (1+|\eul_{n,\utn-1/n}(\omega)-\xi|)\cdot K\cdot (1+|\xi|)\cdot \frac{1}{\sqrt{n}}\cdot (1+|Z_3(\omega)|).
\end{equation} 
Since 
$n\geq n_0$
we have $ K\cdot (1+|\xi|)\cdot \tfrac{1}{\sqrt{n}}\cdot (1+|Z_3(\omega)|)\leq 1/2$, 
and therefore we conclude from~\eqref{central6} that
\begin{equation}\label{central7}
\begin{aligned}
 1 +|\eul_{n,\utn-(t-\utn)}(\omega) -\xi|  &\ge 1 + |\eul_{n,\utn-1/n}(\omega)-\xi| -|\eul_{n,\utn-(t-\utn)}(\omega)-\eul_{n,\utn-1/n}(\omega)|\\
&\ge (1 + |\eul_{n,\utn-1/n}(\omega)-\xi|)/2.
\end{aligned}
\end{equation} 
Using~\eqref{central4}, \eqref{central5} and~\eqref{central7} we obtain
\begin{equation}\label{central8}
\begin{aligned}
& |\eul_{n,\utn-(t-\utn)}(\omega) -\xi|\\
 &\qquad \qquad  \le |\eul_{n,\utn}(\omega)-\eul_{n,\utn-(t-\utn)}(\omega)| +|\eul_{n,\utn}(\omega)-\xi|\\
& \qquad \qquad \le (1+|\eul_{n,\utn-1/n}(\omega)-\xi|)\cdot 3K\cdot(1+|\xi|)\cdot \tfrac{1}{\sqrt{n}}\cdot (1+|Z_1(\omega)|+|Z_2(\omega)|)\\
& \qquad \qquad \le (1+|\eul_{n,\utn-(t-\utn)}(\omega)-\xi|)\cdot 6K\cdot(1+|\xi|)\cdot \tfrac{1}{\sqrt{n}}\cdot (1+|Z_1(\omega)|+|Z_2(\omega)|).
\end{aligned}
\end{equation} 
Since $n\geq n_0$ 
we have $6K\cdot(1+|\xi|)\cdot \tfrac{1}{\sqrt{n}}\cdot (1+|Z_1(\omega)|+|Z_2(\omega)|)\le 1/2$,
which jointly with~\eqref{central8} yields
\[
|\eul_{n,\utn-(t-\utn)}(\omega) -\xi| \le 12K\cdot (1+|\xi|)\cdot \tfrac{1}{\sqrt{n}}\cdot (1+|Z_1(\omega)|+|Z_2(\omega)|).
\]
This finishes the proof of~\eqref{central2}.
\end{proof}

Using Lemmas~\ref{markov},~\ref{occup} and~\ref{central} we can now establish the following two estimates on the probability of sign changes of $\eul_{n}-\xi$ relative to its sign at the gridpoints $0,1/n,\dots,1$. 

\begin{lemma}\label{key}
Let $\xi\in\R$ satisfy $\sigma(\xi)\neq 0$ and let
\[
A_{n,t} =\{(\eul_{n,t}-\xi)\cdot(\eul_{n, \utn}-\xi)\leq 0\}
\]
for all $n\in\N$ and $t\in[0,1]$.  
Then the following 
two
statements hold.
\begin{itemize}
\item[(i)] There exists $c\in(0, \infty)$ such that for all $n\in\N$, all $s\in [0,1)$ and all $A\in \F_s$,
\[
 \int_{s}^1\PP(A \cap A_{n,t})\, dt \le \frac{c}{\sqrt{n}} \cdot \bigl( \PP(A) + \EE\bigl[\ind_A \cdot (\eul_{n,\usn+1/n}-\xi)^2\bigr]\bigr).
\]
\item[(ii)] There exists $c\in(0, \infty)$ such that for all $n\in\N$, all $s\in[0,1)$ and all $A\in \F_{s}$,
\[
\int_s^1\EE\bigl[\ind_{A\cap A_{n,t}} \cdot (\eul_{n,\utn+1/n}-\xi)^2\bigr]\, dt \le  \frac{c}{n} \cdot \bigl(\PP(A) + \EE\bigl[\ind_A \cdot (\eul_{n,\usn+1/n}-\xi)^2\bigr]\bigr).
\]
\end{itemize}
\end{lemma}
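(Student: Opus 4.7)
My plan is to treat both statements by decomposing $\int_s^1$ into an initial piece $\int_s^{(\usn+2/n)\wedge 1}$ of length at most $2/n$ and a main piece $\int_{(\usn+2/n)\wedge 1}^1$. The threshold $\usn+2/n$ is chosen so that for every $t$ in the main piece one has $\utn-(\usn+1/n)\ge 1/n$, which is exactly the hypothesis required to apply Lemma~\ref{central} with its $s$ replaced by $\usn+1/n$; this is legitimate because $A\in\F_s\subseteq\F_{\usn+1/n}$. The initial piece, being short, is handled directly via moment bounds and one-step expansions of the Euler scheme.

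For statement~(i), on the initial piece the trivial bound $\PP(A\cap A_{n,t})\le\PP(A)$ contributes at most $(2/n)\PP(A)\le (c/\sqrt n)\PP(A)$. On the main piece, Lemma~\ref{central} reduces $\PP(A\cap A_{n,t})$ to a Gaussian integral in $z\in\R$ of probabilities of the form $\PP(A\cap\{|\eul_{n,2\utn-t}-\xi|\le c(1+|z|)/\sqrt n\})$, plus a harmless term of order $c\PP(A)/n$. I would then perform the substitution $r=2\utn-t$ cell-by-cell on the grid, which turns the $t$-integral over $[\usn+2/n,1]$ into an $r$-integral over a subset of $[\usn+1/n,1]$ of the probabilities $\PP(A\cap\{|\eul_{n,r}-\xi|\le c(1+|z|)/\sqrt n\})$. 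Conditioning on $\F_{\usn+1/n}$ and invoking Lemma~\ref{markov}, this last $r$-integral equals $\EE[1_A\cdot h(\eul_{n,\usn+1/n})]$, where $h(y)$ is the expected occupation time of a $c(1+|z|)/\sqrt n$-neighbourhood of $\xi$ by $\eul_n^y$. By Lemma~\ref{occup}, $h(y)\le c(1+y^2)(1+|z|)/\sqrt n$. Integrating in $z$ against $e^{-z^2/2}$ and using $1+|\eul_{n,\usn+1/n}|^2\le c(1+\xi^2)(1+(\eul_{n,\usn+1/n}-\xi)^2)$ yields the bound in~(i).

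For statement~(ii), the starting point is the pointwise inequality
\[
1_{A_{n,t}}\cdot(\eul_{n,\utn+1/n}-\xi)^2\le 2(\eul_{n,\utn+1/n}-\eul_{n,\utn})^2+2(\eul_{n,t}-\eul_{n,\utn})^2,
\]
which follows from the fact that on $A_{n,t}$ we have $|\eul_{n,\utn}-\xi|\le|\eul_{n,t}-\eul_{n,\utn}|$. This reduces the claim to bounding the time-integrals of the expected squared Euler increments on the right-hand side. On the initial piece $(\eul_{n,\utn+1/n}-\xi)^2$ equals either $(\eul_{n,\usn+1/n}-\xi)^2$, in which case the bound is immediate, or $(\eul_{n,\usn+2/n}-\xi)^2$, which via Lemma~\ref{markov} and the one-step Euler update is controlled by a constant multiple of $1+(\eul_{n,\usn+1/n}-\xi)^2$. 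On the main piece, for each grid cell $[i/n,(i+1)/n)$ with $i/n\ge\usn+2/n$ the indicator $1_A$ is $\F_{i/n}$-measurable, so conditioning on $\F_{i/n}$ together with the linear growth of $\mu,\sigma$ produces cell-wise bounds of order $(1/n^2)\EE[1_A(1+|\eul_{n,i/n}|^2)]$. Summing over the at most $n$ such cells and again invoking Lemma~\ref{markov} and Lemma~\ref{eulprop} to replace $\EE[1_A|\eul_{n,i/n}|^2]$ by $c\EE[1_A(1+|\eul_{n,\usn+1/n}|^2)]$ uniformly in $i$ yields the required $c/n$-rate.

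The main obstacle, in my view, is the cell-by-cell substitution $r=2\utn-t$ in~(i) together with the verification that Lemma~\ref{central} applies uniformly on the main piece; both demand careful bookkeeping with the grid. Once this is set up, the remainder is a mechanical combination of the Markov property (Lemma~\ref{markov}) with the occupation-time estimate (Lemma~\ref{occup}) and routine moment bounds from Lemma~\ref{eulprop}.
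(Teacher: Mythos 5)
Your proposal is correct and follows essentially the same route as the paper's proof: the same split of $\int_s^1$ at $\usn+2/n$, the same application of Lemma~\ref{central} followed by the measure-preserving reflection $t\mapsto 2\utn-t$ and the Markov/occupation-time argument for part~(i), and the same triangle-inequality reduction of $\ind_{A_{n,t}}(\eul_{n,\utn+1/n}-\xi)^2$ to squared Euler increments for part~(ii). The only immaterial difference is that in~(ii) you condition cell-by-cell on $\F_{i/n}$, whereas the paper conditions once on $\F_{\usn+1/n}$.
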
 

\begin{proof}
Let $n\in\N$, $s\in [0,1)$ and $A\in\F_s$. 
In the following we use $c_1,c_2,\dots \in (0,\infty)$ to denote unspecified
positive constants, which neither depend on $n$ nor on $s$ nor on $A$.

We first prove part (i) of the lemma.
Clearly we may assume that $s<1-1/n$. Then $\usn\le 1-2/n$ and we have
\begin{equation}\label{key00}
 \int_{s}^1\PP(A \cap A_{n,t})\, dt \le \frac{2}{n}\cdot \PP(A) +  \int_{\usn + 2/n}^1\PP(A \cap A_{n,t})\, dt.
\end{equation}
 If $t\in [\usn+2/n,1]$ then $\utn \ge \usn+2/n$, which implies $\utn-1/n \ge \usn+1/n \ge s$. We may thus apply Lemma~\ref{central} to conclude
 that 
there exists $c_1\in(0, \infty)$ such that
\begin{equation}\label{key01}
\begin{aligned}
& \int_{s}^1 \PP(A\cap A_{n,t})\, dt\\
& \qquad \le \frac{c_1}{n}\cdot \PP(A) + c_1\cdot \int_{\R} \int_{\usn+2/n}^1 \PP\bigl(A \cap \bigl\{|\widehat X_{n,\utn-(t-\utn)}-\xi| \le \tfrac{c_1}{\sqrt{n}} (1+|z|)\bigr\}\bigr)\cdot e^{-\frac{z^2}{2}}\, dz
\, dt \\
& \qquad = \frac{c_1}{n}\cdot \PP(A) + c_1\cdot \int_{\R} \int_{\usn+1/n}^{1-1/n} \PP\bigl(A \cap \bigl\{|\widehat X_{n,t}-\xi| \le \tfrac{c_1}{\sqrt{n}} (1+|z|)\bigr\}\bigr)\cdot e^{-\frac{z^2}{2}}\, dz\, dt.
\end{aligned}
\end{equation}
 By the fact that $A\in \F_{\usn+1/n}$ and by the first part of Lemma~\ref{markov} we obtain that for all $z\in\R$,
\begin{equation}\label{key02}
\begin{aligned}
& \int_{\usn+1/n}^{1-1/n} \PP\bigl(A \cap \bigl\{|\widehat X_{n,t}-\xi| \le \tfrac{c_1}{\sqrt{n}} (1+|z|)\bigr\}\bigr)\, dt \\
& \qquad\qquad \qquad = \EE\Bigl[\ind_{A}\cdot \EE\Bigl[\int_{\usn+1/n}^{1-1/n} \ind_{\{|\widehat X_{n,t}-\xi| \le \tfrac{c_1}{\sqrt{n}} (1+|z|)\}}\, dt\Bigl|\eul_{n,\usn+1/n}\Bigr]\Bigr].
\end{aligned}
\end{equation}
Moreover, by the second part of Lemma~\ref{markov} and by Lemma~\ref{occup} we obtain that there exists $c_2\in(0, \infty)$ such that  for all $z,x\in\R$, 
\begin{equation}\label{key03}
\begin{aligned}
& \EE\Bigl[\int_{\usn+1/n}^{1-1/n} \ind_{\{|\widehat X_{n,t}-\xi| \le \tfrac{c_1}{\sqrt{n}} (1+|z|)\}}\, dt\Bigl|\eul_{n,\usn+1/n}=x\Bigr]\\
 & \qquad= \EE\Bigl[\int_{0}^{1-2/n-\usn} \ind_{\{|\widehat X^x_{n,t}-\xi| \le \tfrac{c_1}{\sqrt{n}} (1+|z|)\}}\, dt\Bigr] \le c_2\cdot (1+x^2)\cdot \Bigl( \frac{c_1}{\sqrt{n}} \cdot(1+|z|) + \frac{1}{\sqrt{n}}\Bigr).
\end{aligned}
\end{equation}
Combining~\eqref{key02} and~\eqref{key03} 
and using the fact that for all $a,b\in\R$,
\[
1+a^2\leq 2\,(1+(a-b)^2)\cdot (1+b^2),
\]
we conclude that for  all $z\in\R$,
\begin{equation}\label{key04}
\begin{aligned}
& \int_{\usn+1/n}^{1-1/n} \PP\bigl(A \cap \bigl\{|\widehat X_{n,t}-\xi| \le \tfrac{c_1}{\sqrt{n}} (1+|z|)\bigr\}\bigr)\, dt \\ & \qquad\qquad\qquad\qquad  \le \tfrac{c_2(c_1+1)}{\sqrt{n}}\cdot (1+|z|)\cdot \EE\bigl[\ind_{A}\cdot (1 + \eul_{n,\usn+1/n}^2)\bigr]\\
 & \qquad\qquad\qquad\qquad  \le \tfrac{2c_2(c_1+1)}{\sqrt{n}}\cdot (1+\xi^2)\cdot (1+|z|)\cdot \bigl(\PP(A) + \EE\bigl[\ind_{A}\cdot (\eul_{n,\usn+1/n}-\xi)^2\bigr]\bigr).
\end{aligned}
\end{equation}
Inserting~\eqref{key04} into~\eqref{key01} and observing that $\int_\R (1+|z|)\cdot e^{-z^2/2}\,dz < \infty$ completes the proof of part (i) of the lemma. 

We next prove part (ii).
Clearly,
\begin{align*}
& \int_s^{1} \EE\bigl[\ind_{A\cap A_{n,t}} \cdot (\eul_{n,\utn+1/n}-\xi)^2\bigr]\, dt \\
& \qquad =\int_s^{\usn+1/n} \EE\bigl[\ind_{A\cap A_{n,t}} \cdot (\eul_{n,\utn+1/n}-\xi)^2\bigr]\, dt+ \int_{\usn+1/n}^1 \EE\bigl[\ind_{A\cap A_{n,t}} \cdot (\eul_{n,\utn+1/n}-\xi)^2\bigr]\, dt. 
\end{align*}

If $t\in[s, \usn+1/n)$ then $\underline t_n=\underline s_n$ and therefore
\begin{equation}\label{key003}
\begin{aligned}
 \int_s^{\usn+1/n} \EE\bigl[\ind_{A\cap A_{n,t}} \cdot (\eul_{n,\utn+1/n}-\xi)^2\bigr]\, dt&=\int_s^{\usn+1/n}\EE\bigl[\ind_{A\cap A_{n,t}} \cdot (\eul_{n,\usn+1/n}-\xi)^2\bigr]\, dt\\
& \le  \int_s^{\usn+1/n}  \EE\bigl[\ind_A \cdot (\eul_{n,\usn+1/n}-\xi)^2\bigr]\, dt 
\\ &  \le \frac{1}{n}\cdot  \EE\bigl[\ind_A\cdot (\eul_{n,\usn+1/n}-\xi)^2\bigr].
\end{aligned}
\end{equation}

Next, let $t\in [\usn+1/n, 1]$.  Clearly, we have on $A_t$,
\begin{align*}
|\eul_{n,\utn+1/n}-\xi| &\le |\eul_{n,\utn+1/n}-\eul_{n,t}| + |\eul_{n,t}-\xi| \le |\eul_{n,\utn+1/n}-\eul_{n,t}| + |\eul_{n,t}-\eul_{n,\utn}|.
\end{align*}
Hence, by Lemma~\ref{markov}(i),
\begin{equation}\label{key05}
\begin{aligned}
& \EE\bigl[\ind_{A\cap A_{n,t}}\cdot (\eul_{n,\utn+1/n} - \xi)^2\bigr] \\ &  \qquad\quad  \le \EE \bigl[\ind_A \cdot (|\eul_{n,\utn+1/n}-\eul_{n,t}| + |\eul_{n,t}-\eul_{n,\utn}|)^2 \bigr]\\ & \qquad\quad = \EE\bigl[ \ind_A \cdot \EE\bigl[(|\eul_{n,\utn+1/n}-\eul_{n,t}| + |\eul_{n,t}-\eul_{n,\utn}|)^2\bigl| \eul_{n,\usn+1/n}\bigr]\bigr].
\end{aligned}
\end{equation}

If $t\ge \usn+1/n$ then $\utn \ge \usn+1/n$. Hence, by Lemma~\ref{markov}(ii) and Lemma~\ref{eulprop} we obtain that there exist $c_1, c_2\in(0, \infty)$ such that  for all  $t\in [\usn+1/n, 1]$ and all $x\in\R$,
\begin{equation}\label{key06}
\begin{aligned}
&  \EE\bigl[(|\eul_{n,\utn+1/n}-\eul_{n,t}| + |\eul_{n,t}-\eul_{n,\utn}|)^2\bigl| \eul_{n,\usn+1/n} = x\bigr]\\
& \qquad \quad = \EE[(|\eul^x_{n,\utn-\usn}-\eul^x_{n,t-\usn-1/n}| + |\eul^x_{n,t-\usn-1/n}-\eul^x_{\utn-\usn-1/n}|)^2\bigr] 
\\& \qquad\quad \le  c_1\cdot (1+x^2)\cdot 1/n
\le  c_2\cdot (1+(x-\xi)^2)\cdot 1/n.
\end{aligned}
\end{equation}
It follows from~\eqref{key05} and~\eqref{key06} that
\begin{equation}\label{key07}
\begin{aligned}
& \int_{\usn+1/n}^1\EE\bigl[\ind_{A\cap A_{n,t}}\cdot (\eul_{n,\utn+1/n} - \xi)^2\bigr]\, dt \\
& \qquad\qquad  \le \frac{c_2}{n}
\cdot \int_{\usn+1/n}^1 \EE\bigl[\ind_A\cdot (1+(\eul_{n,\usn+1/n}-\xi)^2)\bigr]\, dt\\
& \qquad\qquad \le \frac{c_2}{n}
\cdot \bigl(\PP(A) + \EE\bigl[\ind_A\cdot (\eul_{n,\usn+1/n}-\xi)^2)\bigr]\bigr).
\end{aligned}
\end{equation}
Combining~\eqref{key003} with~\eqref{key07} completes the proof of part (ii) of the lemma. 
\end{proof}

We are ready to establish the main result in this section, which provides a $p$-th mean estimate of the 
Lebesgue measure of the set
of times $t$ of a sign change of $\eul_{n,t}-\xi$ relative to the sign of $\eul_{n,\utn}-\xi$.

\begin{prop}\label{prop1} 
Let $\xi\in\R$ satisfy $\sigma(\xi)\not=0$ and let
 $p\in [1,\infty)$.
Then there exists  $c\in(0, \infty)$ such that for all $n\in\N$, 
\begin{equation}\label{l33}
\EE\Bigl[\Bigl|\int_0^1  \ind_{\{(\eul_{n,t}-\xi)\cdot(\eul_{n, \utn}-\xi)\leq 0\}}\,dt\Bigr|^p\Bigr]^{1/p}\leq \frac{c}{\sqrt{n}}. 
\end{equation}
\end{prop}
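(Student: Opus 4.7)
The strategy is to reduce the case of general $p\in[1,\infty)$ to integer exponents via Lyapunov's inequality $\EE[I_n(0)^p]^{1/p}\le\EE[I_n(0)^m]^{1/m}$ for any integer $m\ge p$, where
\[
I_n(s)=\int_s^1\ind_{A_{n,t}}\,dt,\qquad A_{n,t}=\{(\eul_{n,t}-\xi)(\eul_{n,\utn}-\xi)\le 0\},
\]
and then to treat the integer case by induction on $m$. A naive induction on the bare statement $\EE[I_n(0)^m]\le c_m n^{-m/2}$ does not close, since one iteration leaves behind a weighted expression in $(\eul_{n,\cdot}-\xi)^2$ that Lemma~\ref{key}(i) alone cannot absorb. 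The key observation is that the right-hand sides of Lemma~\ref{key}(i) and Lemma~\ref{key}(ii) share the same structure, so the induction closes if one carries the quadratic correction through the inductive hypothesis.

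Accordingly, I would prove by induction on $m\in\N$ the following strengthened claim: there exists $c_m\in(0,\infty)$ such that for all $n\in\N$, all $s\in[0,1)$ and all $A\in\F_s$,
\[
\EE\bigl[\ind_A\cdot I_n(s)^m\bigr]\le\frac{c_m}{n^{m/2}}\bigl(\PP(A)+\EE\bigl[\ind_A\cdot(\eul_{n,\usn+1/n}-\xi)^2\bigr]\bigr).
\]
The base case $m=1$ is exactly Lemma~\ref{key}(i). For the induction step I would invoke the pathwise identity
\[
I_n(s)^m=m\int_s^1\ind_{A_{n,t}}\cdot I_n(t)^{m-1}\,dt,
\]
which holds a.s.\ since $s\mapsto I_n(s)$ is $1$-Lipschitz with derivative $-\ind_{A_{n,s}}$ a.e.\ and $I_n(1)=0$. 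Because $A\in\F_s\subset\F_t$ and $A_{n,t}\in\F_t$, the set $A\cap A_{n,t}$ belongs to $\F_t$, so applying the induction hypothesis with $(s,A)$ replaced by $(t,A\cap A_{n,t})$ gives a bound on $\EE[\ind_{A\cap A_{n,t}}\cdot I_n(t)^{m-1}]$ by $c_{m-1}n^{-(m-1)/2}$ times $\PP(A\cap A_{n,t})+\EE[\ind_{A\cap A_{n,t}}(\eul_{n,\utn+1/n}-\xi)^2]$. Integrating this in $t\in[s,1]$ and controlling the two summands by Lemma~\ref{key}(i) and Lemma~\ref{key}(ii) respectively produces an additional factor of order $n^{-1/2}$, yielding the target inequality with $c_m=2c\cdot m\cdot c_{m-1}$.

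To conclude, I would specialize the claim to $s=0$ and $A=\Omega$: then $\usn+1/n=1/n$, and Lemma~\ref{eulprop} applied with $p=2$, $t=0$, $\delta=1/n$, together with $|\eul_{n,1/n}-\xi|\le|\eul_{n,1/n}-x_0|+|x_0-\xi|$, gives $\sup_{n\in\N}\EE[(\eul_{n,1/n}-\xi)^2]<\infty$. This yields $\EE[I_n(0)^m]\le C_m/n^{m/2}$ for every $m\in\N$, and \eqref{l33} for arbitrary $p\in[1,\infty)$ follows by Lyapunov with $m=\lceil p\rceil$. The main technical obstacle is diagnosing the correct strengthening of the induction hypothesis; once one notices that the right-hand sides in both parts of Lemma~\ref{key} reproduce the very same quantity $\PP(A)+\EE[\ind_A(\eul_{n,\usn+1/n}-\xi)^2]$, with the time-gain in part (ii) being $n^{-1}$ versus $n^{-1/2}$ in part (i), the recursion closes cleanly and each step contributes the expected $n^{-1/2}$.
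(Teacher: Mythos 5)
Your proof is correct and follows essentially the same route as the paper: reduction to integer exponents, induction on the power, with Lemma~\ref{key}(i) supplying the base case and the combination of Lemma~\ref{key}(i) and~(ii) closing the step. The only difference is bookkeeping --- you carry the quadratic weight $\PP(A)+\EE[\ind_A(\eul_{n,\usn+1/n}-\xi)^2]$ inside a strengthened conditional induction hypothesis, whereas the paper unrolls $\bigl(\int_0^1\ind_{A_{n,t}}\,dt\bigr)^{q+1}$ into the $(q+1)!$-iterated integral and applies the two lemmas in a chain; both are valid and rest on the same observation that the right-hand sides of the two parts of Lemma~\ref{key} reproduce the same quantity.
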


\begin{proof}
Clearly, it suffices to consider only the case $p\in\N$.
For $n\in\N$ and $t\in [0,1]$
put $A_{n,t} =\{(\eul_{n,t}-\xi)\cdot(\eul_{n, \utn}-\xi)\leq 0\}$   as in Lemma~\ref{key}, and for $n,p\in\N$ let
\[
a_{n,p} = \EE\Bigl[\Bigl(\int_0^1 \ind_{A_{n,t}}\, dt\Bigr)^p\Bigr].
\]
 We prove by induction on $p$ that for every $p\in\N$ there exists $c\in (0,\infty)$ such that for all $n\in\N$,
\begin{equation}\label{prop01}
a_{n,p} \le c\cdot n^{-p/2}.
\end{equation}

First assume that $p=1$. Using Lemma~\ref{key}(i) with $s=0$ and $A=\Omega$ we 
obtain that there exists $c\in (0,\infty)$ such that for all $n\in\N$,
\[
a_{n,1} = \int_0^1 \PP(A_{n,t})\, dt  \le \frac{c}{\sqrt{n}}\cdot (1 + \EE[(\eul_{n,1/n}-\xi)^2 ]) \le \frac{c}{\sqrt{n}}\cdot (1+2\xi^2+2\sup_{j\in\N}\EE[\|\eul_j\|_\infty^2]).
\]
Observing Lemma~\ref{eulprop} we thus see that~\eqref{prop01} holds for $p=1$. 

Next, let $q\in\N$ 
and assume that~\eqref{prop01} holds for all $p\in\{1, \ldots, q\}$.
Clearly,
\begin{align*}
a_{n,q+1}  &= (q+1)!\cdot \int_0^1\int_{t_1}^1\ldots \int_{t_q}^1 \PP(A_{n,t_1}\cap A_{n,t_2}\cap \ldots \cap A_{n,t_{q+1}})\,dt_{q+1}\, \ldots \,dt_2\, dt_1. 
\end{align*}

First applying Lemma~\ref{key}(i) with $A= A_{n,t_1}\cap \ldots \cap A_{n,t_q}$ and $s= t_{q}$, then applying $(q-1)$-times Lemma~\ref{key}(ii) with $A= A_{n,t_1}\cap \ldots \cap A_{n,t_j}$ and $s= t_j$ for $j=q-1,\ldots, 1$, and finally applying Lemma~\ref{key}(ii) with $A=\Omega$ and $s=0$ we conclude that there exist constants $c_1, c_2, c_3\in(0, \infty)$ such that for all $n\in\N$,
\begin{align*}
a_{n,q+1} & \le  \frac{c_1}{\sqrt{n}}\cdot\Bigl( a_{n,q}+\int_0^1\ldots \int_{t_{q-1}}^1 \EE\bigl[\ind_{A_{n,t_1}\cap \ldots \cap A_{n,t_q}}\cdot (\eul_{n,\underline{t_q}_n+1/n}-\xi)^2 ])\,dt_q \, \ldots \, dt_1 \Bigr)\\
&  \le c_2\cdot\Bigl(\frac{a_{n,q}}{\sqrt{n}}  + \frac{a_{n,q-1}}{n^{3/2}}+\ldots + \frac{a_{n,1} }{n^{q-1/2}}+ \frac{1}{n^{q-1/2}}\cdot\int_0^1  \EE\bigl[\ind_{A_{n,t_1}}\cdot (\eul_{n,\underline{t_1}_n+1/n} - \xi)^2 \bigr] \, dt_1\Bigr)\\
&  \le c_2\cdot\Bigl(\frac{a_{n,q}}{\sqrt{n}}  + \frac{a_{n,q-1}}{n^{3/2}}+\ldots + \frac{a_{n,1} }{n^{q-1/2}}+ \frac{c_3}{n^{q+1/2}}\cdot \bigl(1 + 2\xi^2 + 2\sup_{j\in\N}\EE[\bigl\|\eul_j\|_\infty^2\bigr] \bigr)\Bigr).
\end{align*}
Employing Lemma~\ref{eulprop} and the induction hypothesis yields the validity of~\eqref{prop01} for $p=q+1$, which finishes the proof of the proposition.
\end{proof}

\subsection{The transformed equation}\label{4.3}

We turn to the construction and the properties of the mapping $G\colon\R\to\R$ that is used to switch from the SDE~\eqref{sde0} to an SDE with Lipschitz continuous coefficients.
The material presented in this subsection is essentially known from~\cite{LS15b}.

\begin{lemma}\label{transform1}
There exists a function $G\colon \R\to\R$ with the following properties.
\begin{itemize}
\item[(i)] $G$ is differentiable with 
\[
0<\inf_{x\in\R} G'(x)\leq \sup_{x\in\R} G'(x)<\infty.
\]
In particular, $G$ is Lipschitz continuous and has an inverse $G^{-1}\colon \R\to \R$ that is Lipschitz continuous as well.
\item[(ii)] The derivative $G'$ of $G$ is Lipschitz continuous hence absolutely continuous.  Moreover, $G'$ has a bounded Lebesgue-density $G''\colon \R\to \R$ that
is Lipschitz continuous on each of the intervals $(\xi_0,\xi_1 ),\dots,(\xi_k,\xi_{k+1})$ and such that
the functions 
\[
\widetilde \mu=(G'\cdot \mu+\tfrac{1}{2}G''\cdot\sigma^2)\circ G^{-1} \, \text{ and }\, \widetilde\sigma=(G'\cdot\sigma)\circ G^{-1}
\] 
are Lipschitz continuous.
\end{itemize}
\end{lemma}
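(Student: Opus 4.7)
The plan is to construct $G$ as the identity plus small compactly supported ``kinks'' near each discontinuity $\xi_i$ of $\mu$, engineered so that the jump of $G''$ at $\xi_i$ exactly absorbs the jump of $G'\cdot\mu$ via the factor $\tfrac{1}{2}\sigma^2$. Fix a $C^\infty$ cut-off $\psi\colon\R\to[0,1]$ with $\psi\equiv 1$ on $[-1/2,1/2]$ and support in $[-1,1]$, and choose $c>0$ small enough that the intervals $[\xi_i-c,\xi_i+c]$, $i=1,\dots,k$, are pairwise disjoint subsets of $\R$ and $\inf_{|x-\xi_i|\le c}|\sigma(x)|>0$ for each $i$; this is possible by continuity of $\sigma$ together with (A2).

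For each $i\in\{1,\ldots,k\}$, let $\mu_i^\pm=\lim_{x\to\xi_i\pm}\mu(x)$, which exist thanks to (A1), and set
\[
\alpha_i=-\frac{\mu_i^+-\mu_i^-}{2\,\sigma^2(\xi_i)},\qquad \phi_i(x)=\alpha_i\,(x-\xi_i)\,|x-\xi_i|\,\psi\bigl((x-\xi_i)/c\bigr),\qquad G(x)=x+\sum_{i=1}^k\phi_i(x).
\]
A direct differentiation shows that each $\phi_i$ is $C^1$ with $\phi_i'(\xi_i)=0$, that $\phi_i'$ is Lipschitz, and that its a.e.\ derivative $\phi_i''$ is bounded, Lipschitz on each of $(-\infty,\xi_i)$ and $(\xi_i,\infty)$, and satisfies $\phi_i''(\xi_i+)-\phi_i''(\xi_i-)=4\alpha_i$, the contribution coming from the derivative of $x\mapsto\sgn(x-\xi_i)$.

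For (i), one observes that $\|\phi_i'\|_\infty\le C_i\,c$, so shrinking $c$ further yields $\tfrac12\le G'\le \tfrac32$, which makes $G$ a bi-Lipschitz bijection of $\R$. For (ii), since $\phi_i'(\xi_i)=0$ the function $G'$ is continuous on $\R$ and Lipschitz on each $(\xi_{i-1},\xi_i)$, hence Lipschitz globally, while $G''$ is bounded and piecewise Lipschitz by construction. Because $G^{-1}$ is Lipschitz, it suffices to show that $G'\cdot\sigma$ and $F:=G'\cdot\mu+\tfrac12 G''\cdot\sigma^2$ are Lipschitz on $\R$ as functions of $x$. Each is Lipschitz on every open interval $(\xi_{i-1},\xi_i)$ as a sum/product of Lipschitz functions; and for $F$ continuity across $\xi_i$ follows from the key cancellation
\[
F(\xi_i+)-F(\xi_i-)=G'(\xi_i)(\mu_i^+-\mu_i^-)+\tfrac12\cdot 4\alpha_i\cdot\sigma^2(\xi_i)=0,
\]
using that $G'(\xi_i)=1$ (since all $\phi_j'$ vanish at $\xi_i$, as $\xi_i\notin\operatorname{supp}\phi_j$ for $j\neq i$). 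A continuous function that is Lipschitz on each piece of a finite partition is Lipschitz globally, which closes (ii).

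The principal subtlety is the exact jump cancellation: the constants $\alpha_i$ are forced by the requirement that $\tfrac12[G''(\xi_i+)-G''(\xi_i-)]\sigma^2(\xi_i)$ offset $G'(\xi_i)(\mu_i^+-\mu_i^-)$, and this is the one place where the nondegeneracy assumption $\sigma(\xi_i)\neq 0$ from (A2) is truly essential, as it is needed to divide by $\sigma^2(\xi_i)$. All remaining ingredients—uniform control of $G'-1$, the Lipschitz inverse, and piecewise regularity of $G''$—are routine consequences of the explicit formula once $c$ has been chosen small enough.
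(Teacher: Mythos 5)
Your construction is essentially the paper's: the same coefficients $\alpha_i=(\mu(\xi_i-)-\mu(\xi_i+))/(2\sigma^2(\xi_i))$, the same localized perturbation $\alpha_i(x-\xi_i)|x-\xi_i|$ times a compactly supported bump, the same jump cancellation $G'(\xi_i)(\mu(\xi_i+)-\mu(\xi_i-))+\tfrac12\cdot 4\alpha_i\cdot\sigma^2(\xi_i)=0$, and the same smallness-of-support argument for $\inf G'>0$ (the paper fixes an explicit radius $\rho$ depending on $1/(6|\alpha_i|)$ and the gaps $\xi_i-\xi_{i-1}$ rather than ``shrink $c$''). One small point you skip over: since (A1) does not force $\mu(\xi_i)$ to equal either one-sided limit, establishing $F(\xi_i+)=F(\xi_i-)$ is not quite enough to make $\widetilde\mu$ continuous \emph{at} $G(\xi_i)$; you must also choose the value of the Lebesgue density $G''$ at each $\xi_i$ so that $G'(\xi_i)\mu(\xi_i)+\tfrac12 G''(\xi_i)\sigma^2(\xi_i)$ matches the common one-sided limit, which is exactly what the paper's definition $G''(\xi_i)=G''(\xi_i+)+2G'(\xi_i)(\mu(\xi_i+)-\mu(\xi_i))/\sigma^2(\xi_i)$ accomplishes; this is a one-line repair that again uses $\sigma(\xi_i)\neq 0$.
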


\begin{proof}
We only provide a sketch of the proof.
If $k=0$ then $\mu$ and $\sigma$ are Lipschitz continuous and we can take $G(x) = x$ for all $x\in \R$. 

Now, assume that $k\in \N$. Since $\mu$ is Lipschitz continuous on each of the intervals $(\xi_0,\xi_1),\ldots$, $(\xi_k,\xi_{k+1})$ it is easy to see that the one-sided limits $\mu(\xi_i-)$ and $\mu(\xi_i+)$ exist for all $i\in\{1,\ldots,k\}$. 
For $i\in\{1, \ldots, k\}$ put
\[
\alpha_i=\frac{\mu(\xi_i-)-\mu(\xi_i+)}{2 \sigma^2(\xi_i)},
\]
let $\rho\in(0, \infty]$ be given by
\[
\rho=  \begin{cases}
\frac{1}{6 |\alpha_1|}, & \text{if }k=1, \\
\min\bigl(\bigl\{\frac{1}{6 |\alpha_i|}\colon i\in \{1, \ldots, k\}\bigr\} \cup \bigl\{ \frac{\xi_i-\xi_{i-1}}{2}\colon i\in \{2, \ldots, k\}\bigr\} \bigr),& \text{if }k\geq 2,
\end{cases}
\]
where we use the convention $1/0=\infty$, let $\nu\in (0, \rho)$, 
let $\phi\colon\R\to\R$ be given by
\[
\phi(x)=(1-x^2)^3\cdot \ind_{[-1, 1]}(x), 
\]
and define $G\colon\R\to\R$ by
\[
G(x)=x+\sum_{i=1}^k \alpha_i\cdot (x-\xi_i)\cdot |x-\xi_i|\cdot \phi \Bigl(\frac{x-\xi_i}{\nu}\Bigr).
\]

It is  straightforward to check that $G$ is differentiable with 
$\sup_{x\in\R} G'(x)<\infty$.
For the proof of $\inf_{x\in\R} G'(x)>0$ see Lemma 2.2 in \cite{LS15b}.

Put $\Theta=\{\xi_1, \ldots, \xi_k\}$. It is  straightforward to check that $G'$ is Lipschitz continuous and continuously differentiable on $\R\setminus\Theta$, $(G_{|\R\setminus\Theta})''$ is bounded, Lipschitz continuous 
on  each of the intervals $(\xi_0,\xi_1),\ldots,(\xi_k, \xi_{k+1})$ and has one-sided limits $(G_{|\R\setminus\Theta})''(\xi_i-)$ and $(G_{|\R\setminus\Theta})''(\xi_i+)$ for all $i\in\{1, \ldots, k\}$.
 Moreover, one can show that  for all $i\in\{1,\ldots,k\}$,
\begin{equation}\label{rr4}
(G'\cdot \mu+\tfrac{1}{2}G''\cdot\sigma^2)(\xi_i+) = (G'\cdot \mu+\tfrac{1}{2}G''\cdot\sigma^2)(\xi_i-).
\end{equation}

 By a slight abuse of notation we define an extension $G''\colon \R\to \R$ by  
taking 
\begin{equation}\label{r12}
G''(\xi_i)=(G_{|\R\setminus\Theta})''(\xi_i+)+\frac{2G'(\xi_i)\cdot (\mu(\xi_i+)-\mu(\xi_i))}{\sigma^2(\xi_i)}
\end{equation}
for $i\in\{1, \ldots, k\}$. Clearly, $G''$ is then a bounded Lebesgue-density of $G'$. Furthermore, it is straightforward to check that $\widetilde\mu$ and $\widetilde\sigma$ are Lipschitz continuous, which  completes the proof of the lemma.     
\end{proof}

Next, choose $G$ according to Lemma~\ref{transform1} and define a stochastic process $Z\colon [0,1]\times\Omega\to\R$ by
\begin{equation}\label{tr}
Z_t=G(X_t), \quad t\in[0,1].
\end{equation}

\begin{lemma}
The process $Z$ is the unique strong solution of the SDE
\begin{equation}\label{sde1}
\begin{aligned}
dZ_t & = \widetilde\mu(Z_t) \, dt + \widetilde\sigma(Z_t) \, dW_t, \quad t\in [0,1],\\
Z_0 & = G(x_0)
\end{aligned}
\end{equation}
with $\widetilde  \mu$ and $\widetilde \sigma$ according to Lemma~\ref{transform1}(ii).
\end{lemma}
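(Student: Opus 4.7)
The plan is to apply a generalized It\^o formula to the composition $Z_t=G(X_t)$, drop the contribution of the exceptional set $\{\xi_1,\ldots,\xi_k\}$ using the non-degeneracy of $\sigma$ at these points, and then invoke standard Lipschitz theory for existence/uniqueness of the transformed SDE.

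First I would record what Lemma~\ref{transform1} gives us about the regularity of $G$: the derivative $G'$ is Lipschitz continuous and bounded, and the a.e.\ density $G''$ is bounded. In particular, $G$ can be written as the difference of two convex functions (e.g.\ $G(x)+Cx^2/2$ and $Cx^2/2$ for $C$ larger than the Lipschitz constant of $G'$). The Meyer-It\^o / Bouleau-Yor formula then applies to the continuous semimartingale $X$: it yields
\begin{equation*}
G(X_t)=G(x_0)+\int_0^t G'(X_s)\,dX_s+\frac12\int_0^t G''(X_s)\,d\langle X\rangle_s,\qquad t\in[0,1],
\end{equation*}
where $G''$ is the Radon-Nikodym density of (the signed measure associated with) $G'$ and the integral is in the Lebesgue sense against $d\langle X\rangle_s=\sigma^2(X_s)\,ds$. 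The second derivative is ambiguous on the null set $\{\xi_1,\ldots,\xi_k\}$, but since $\sigma(\xi_i)\neq 0$ for $i\in\{1,\ldots,k\}$, the occupation time formula (or, equivalently, the finiteness of the local time of $X$ at $\xi_i$, which was already exploited in the proof of Lemma~\ref{occup}) implies that the Lebesgue measure of $\{s\in[0,1]\colon X_s=\xi_i\}$ vanishes almost surely; hence the value of $G''$ at the points $\xi_i$ is irrelevant for the integral, and any bounded measurable extension of $G''$, such as the one fixed in~\eqref{r12}, produces the same object.

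Plugging the SDE~\eqref{sde0} for $X$ into $\int_0^t G'(X_s)\,dX_s$ and grouping the bounded-variation terms yields
\begin{equation*}
Z_t=G(x_0)+\int_0^t\bigl(G'\cdot\mu+\tfrac12 G''\cdot\sigma^2\bigr)(X_s)\,ds+\int_0^t (G'\cdot\sigma)(X_s)\,dW_s.
\end{equation*}
Using $X_s=G^{-1}(Z_s)$ and the definitions $\widetilde\mu=(G'\cdot\mu+\tfrac12 G''\cdot\sigma^2)\circ G^{-1}$ and $\widetilde\sigma=(G'\cdot\sigma)\circ G^{-1}$ from Lemma~\ref{transform1}(ii), the right-hand side becomes $G(x_0)+\int_0^t\widetilde\mu(Z_s)\,ds+\int_0^t\widetilde\sigma(Z_s)\,dW_s$, which is exactly the integral form of~\eqref{sde1}. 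Uniqueness then follows from the fact that $\widetilde\mu$ and $\widetilde\sigma$ are globally Lipschitz (again by Lemma~\ref{transform1}(ii)), so~\eqref{sde1} has a unique strong solution by the classical It\^o theorem.

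The one step that requires care is the justification of the generalized It\^o formula for the non-$C^2$ transformation $G$; everything else is bookkeeping. Concretely, the obstacle is that $G''$ has jump discontinuities at each $\xi_i$ (cf.\ the correction term in~\eqref{r12} and the matching condition~\eqref{rr4}), so one must argue either via the Bouleau-Yor formula for semimartingales (which produces a local-time integral that collapses to a Lebesgue integral precisely because $G'$ is absolutely continuous) or via an approximation of $G$ by $C^2$ functions $G_\varepsilon$ with $G_\varepsilon'\to G'$ uniformly and $G_\varepsilon''\to G''$ boundedly in $L^1_{\text{loc}}$, passing to the limit with the help of the occupation time bound above to control the exceptional set. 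Either route delivers the identity cleanly, after which matching coefficients and invoking Lipschitz uniqueness finishes the proof.
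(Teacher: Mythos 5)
Your proof is correct and follows essentially the same route as the paper: apply a generalized It\^o formula to $G(X_t)$ (the paper simply cites the version for functions with absolutely continuous derivative, \cite[Problem 3.7.3]{ks91}, where you justify it via Meyer--It\^o/approximation and the irrelevance of $G''$ on the Lebesgue-null set $\{\xi_1,\dots,\xi_k\}$), match coefficients with $\widetilde\mu$ and $\widetilde\sigma$, and conclude uniqueness from their Lipschitz continuity. The extra care you take with the exceptional set is sound but is exactly what the cited formula already packages.
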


\begin{proof} According to Lemma~\ref{transform1}(ii), $G'$ is absolutely continuous.  We may therefore apply It\^{o}'s formula, see e.g.~\cite[Problem 3.7.3]{ks91}, to conclude that for every $t\in[0,1]$ we have $\PP\text{-a.s.}$,
\[
G(X_t)=G(x_0)+\int_0^t(G'(X_s)\cdot \mu(X_s)+\tfrac{1}{2}G''(X_s)\cdot\sigma^2(X_s))\,ds+\int_0^t G'(X_s)\cdot\sigma(X_s)\,dW_s,
\]
which implies that $Z$ is a strong solution of the SDE \eqref{sde1}. Due to the Lipschitz continuity of $\widetilde\mu$ and $\widetilde\sigma$, see Lemma \ref{transform1}(ii),
  the strong solution of \eqref{sde1} is unique.
\end{proof}

For every $n\in\N$ we use $\eultr_{n}=(\eultr_{n,t})_{t\in[0,1]}$ to denote the time-continuous Euler-Maruyama scheme  with step-size $1/n$ associated to the SDE \eqref{sde1}, i.e. 
$\eultr_{n,0}=G(x_0)$ and
\[
\eultr_{n,t}=\eultr_{n,i/n}+\widetilde\mu(\eultr_{n,i/n})\cdot (t-i/n)+\widetilde\sigma(\eultr_{n,i/n})\cdot (W_t-W_{i/n})
\]
for $t\in(i/n,(i+1)/n]$ and $i\in\{0,\ldots,n-1\}$. The following estimates are standard error bounds for the time-continuous Euler-Maruyama scheme associated to an SDE with Lipschitz continuous coefficients.

\begin{lemma}\label{treulprop}
Let $p\in [1,\infty)$. Then there exists  $c\in(0, \infty)$ such that for all $n\in\N$,
\begin{itemize}
\item[(i)]
$
\displaystyle{\EE\bigl[\| \eultr_{n}\|_\infty^p\bigr]\leq c}
$,\\[-.25cm]
\item[(ii)] 
$
\displaystyle{
\bigl(\EE\bigl[\|Z-\eultr_{n})\|_\infty^p\bigr]\bigr)^{1/p}\leq c/\sqrt{n}}
$.
\end{itemize}
\end{lemma}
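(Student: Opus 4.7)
The plan is to reduce both statements to standard Euler-Maruyama error bounds under Lipschitz coefficients, exploiting that $\widetilde\mu$ and $\widetilde\sigma$ are Lipschitz continuous by Lemma~\ref{transform1}(ii).

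For part (i), Lipschitz continuity of $\widetilde\mu$ and $\widetilde\sigma$ implies a linear growth condition of the form~\eqref{LG} for these coefficients. Hence the argument that establishes Lemma~\ref{eulprop} for the scheme $\eul_n^x$---based on the integral representation analogous to~\eqref{intrep}, the Burkholder-Davis-Gundy inequality, H\"older's inequality, and Gronwall's lemma---applies verbatim to $\eultr_n$ with initial value $G(x_0)$. This proves (i) and, crucially for (ii), also yields the auxiliary H\"older regularity estimate
\[
\bigl(\EE\bigl[\sup_{s\in [t,t+\delta]}|\eultr_{n,s} - \eultr_{n,t}|^p\bigr]\bigr)^{1/p} \le c\cdot \sqrt{\delta}
\]
uniformly in $n\in\N$, $\delta\in [0,1]$ and $t\in [0,1-\delta]$.

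For part (ii), set $e_{n,t} = Z_t - \eultr_{n,t}$. Starting from the integral representation
\[
e_{n,t} = \int_0^t \bigl(\widetilde\mu(Z_s) - \widetilde\mu(\eultr_{n,\usn})\bigr)\,ds + \int_0^t \bigl(\widetilde\sigma(Z_s) - \widetilde\sigma(\eultr_{n,\usn})\bigr)\,dW_s,
\]
I would split each integrand by adding and subtracting $\widetilde\mu(\eultr_{n,s})$ and $\widetilde\sigma(\eultr_{n,s})$ and then apply the Lipschitz property together with the Burkholder-Davis-Gundy and H\"older inequalities. For $p\in [2,\infty)$ this yields an inequality of the form
\[
\EE\bigl[\sup_{u\in [0,t]}|e_{n,u}|^p\bigr] \le c_1\cdot \int_0^t \EE\bigl[\sup_{u\in [0,s]}|e_{n,u}|^p\bigr]\,ds + c_1\cdot \int_0^1 \EE\bigl[|\eultr_{n,s} - \eultr_{n,\usn}|^p\bigr]\,ds
\]
for all $n\in\N$ and $t\in [0,1]$. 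The auxiliary regularity estimate from part (i) bounds the last integral by $c_2/n^{p/2}$, and Gronwall's lemma then yields (ii) for $p\in [2,\infty)$; the remaining range $p\in [1,2)$ is immediate from Jensen's inequality.

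I do not anticipate any genuine obstacle: the derivation of Lemma~\ref{eulprop} cited in the paper rests solely on the linear growth property and standard stochastic calculus, both of which are directly available for the SDE~\eqref{sde1}. The only technical point worth checking is that the constants remain uniform in $n$, which is automatic since the Lipschitz and growth constants of $\widetilde\mu$ and $\widetilde\sigma$ do not depend on $n$.
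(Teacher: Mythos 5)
Your proposal is correct and is precisely the standard argument the paper invokes without writing it out: the paper simply states that these are ``standard error bounds for the time-continuous Euler-Maruyama scheme associated to an SDE with Lipschitz continuous coefficients,'' and your reduction to linear growth for (i) and the Lipschitz/Gronwall argument with the one-step regularity estimate for (ii) is exactly that standard proof. The only point worth making explicit is that Gronwall's lemma requires the a priori finiteness of $\EE\bigl[\sup_{u\in[0,t]}|e_{n,u}|^p\bigr]$, which follows from part (i) together with $\EE\bigl[\|Z\|_\infty^p\bigr]<\infty$ (a consequence of Lemma~\ref{solprop} and the Lipschitz continuity of $G$).
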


Finally, we provide an estimate for the transformed time-continuous Euler-Maruyama scheme $G\circ \eul_n = (G(\eul_{n,t}))_{t\in[0,1]}$.

\begin{lemma}\label{treulprop2}
Let $p\in [1,\infty)$. Then there exists  $c\in(0, \infty)$ such that for all $n\in\N$,
\[
\EE\bigl[\|G\circ \eul_{n}\|_\infty^p\bigr]\leq c.
\]
\end{lemma}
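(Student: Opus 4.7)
The plan is to exploit the fact that the transformation $G$ supplied by Lemma~\ref{transform1}(i) is Lipschitz continuous, combined with the uniform $L_p$-moment bound on the time-continuous Euler-Maruyama scheme coming from Lemma~\ref{eulprop}.

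First, from Lemma~\ref{transform1}(i) we have $L := \sup_{x\in\R} G'(x) < \infty$, so $G$ is globally Lipschitz with constant $L$. In particular, for every $y\in\R$,
\[
|G(y)| \le |G(0)| + L\cdot |y| \le (|G(0)| + L)\cdot (1+|y|).
\]
Applying this pointwise along the trajectory of $\eul_n$ and taking the supremum over $t\in[0,1]$ yields
\[
\|G\circ \eul_n\|_\infty \le (|G(0)|+L)\cdot (1+\|\eul_n\|_\infty).
\]

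Next, I would take $p$-th powers, use the elementary inequality $(a+b)^p\le 2^{p-1}(a^p+b^p)$ for $a,b\ge 0$, and take expectations, to obtain
\[
\EE\bigl[\|G\circ\eul_n\|_\infty^p\bigr] \le 2^{p-1}(|G(0)|+L)^p\cdot \bigl(1+\EE\bigl[\|\eul_n\|_\infty^p\bigr]\bigr).
\]
Since $\eul_n = \eul_n^{x_0}$, Lemma~\ref{eulprop} (second estimate, with $x=x_0$) gives $\sup_{n\in\N} \EE[\|\eul_n\|_\infty^p] < \infty$, and the desired uniform bound follows with $c = 2^{p-1}(|G(0)|+L)^p\cdot (1+\sup_{n\in\N} \EE[\|\eul_n\|_\infty^p])$.

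There is no real obstacle here: the statement is a direct consequence of the Lipschitz continuity of $G$ (so that $G$ has at most linear growth) and the already established uniform $L_p$-moment bound for $\eul_n$. The lemma is stated separately only because it is invoked repeatedly in the proofs of Theorem~\ref{Thm1} and Theorem~\ref{Thm2}, where one needs to control $G\circ \eul_n$ in terms that are independent of $n$.
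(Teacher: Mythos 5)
Your proof is correct and follows exactly the same route as the paper: Lipschitz continuity of $G$ from Lemma~\ref{transform1}(i) gives linear growth of $G$, hence $\|G\circ \eul_n\|_\infty \le c\,(1+\|\eul_n\|_\infty)$, and the uniform moment bound from Lemma~\ref{eulprop} finishes the argument. Nothing to add.
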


\begin{proof} 
According to Lemma~\ref{transform1}(i), $G$ is Lipschitz continous and hence satisfies a linear growth condition, i.e. there exists $c\in (0, \infty)$ such that $|G(x)|\leq c\cdot (1+|x|)$ for all $x\in\R$. Hence
\[
\|G\circ \eul_{n}\|_\infty \leq c\cdot(1+\|\eul_{n}\|_\infty),
\]
which jointly with Lemma \ref{eulprop}  implies the statement of the lemma.
\end{proof}

\subsection{Proof of Theorem~\ref{Thm1}}\label{4.4}
We choose $G$ 
and a Lebesgue density $G''$ of $G$
according to Lemma~\ref{transform1}, define $Z$ by~\eqref{tr},   
and for every $n\in\N$ we define a function $u_n\colon [0,1]\to[0, \infty)$ by
\[
u_n(t)=\EE\bigl[\sup_{s\in[0, t]} |G(\eul_{n,s})-\eultr_{n,s}|^p\bigr].
\]
Note that the functions $u_n$, $n\in\N$, are well-defined 
and bounded
due to Lemma~\ref{treulprop}(i) and Lemma~\ref{treulprop2}. 
 
Below we show that there exists  $c\in(0, \infty)$ such that for all $n\in\N$ 
and all $t\in[0,1]$,
\begin{equation}\label{G7}
u_n(t)\leq c\cdot\Bigl(\frac{1}{n^{p/2}}+\sum_{i=1}^k \EE\Bigl[\Bigl |\int_0^1  \ind_{\{(\eul_{n,s}-\xi_i)\cdot(\eul_{n, \usn}-\xi_i)\leq 0\}}\,ds\Bigr|^p\Bigr]+\int_0^t u_n(s)\, ds\Bigr).
\end{equation}
Using Proposition~\ref{prop1} we conclude from~\eqref{G7} that there exists $c\in (0,\infty)$ such that for all $n\in\N$ and all $t\in[0,1]$,
\[
u_n(t)\leq c\cdot \Bigl( \frac{1}{n^{p/2}} +\int_0^t u_n(s)\, ds\Bigr).
\]
By Gronwall's inequality it then follows that there exists $c\in (0,\infty)$ such that for all $n\in\N$,
\begin{equation}\label{p1}
u_n(1) \le \frac{c}{n^{p/2}}.
\end{equation} 
Using the fact that $G^{-1}$ is Lipschitz continuous, see Lemma~\ref{transform1}(i), as well as Lemma~\ref{treulprop}(ii) and~\eqref{p1} we conclude that there exist $c_1,c_2\in (0,\infty)$ such that for all $n\in\N$,  
\[
\EE\bigl[\|X-\eul_n\|_\infty^p\bigr]  \le c_1\cdot \EE\bigl[\|Z-G\circ \eul_n\|_\infty^p\bigr] \le 2^p\cdot c_1\cdot\bigl( \EE\bigl[\|Z-\widehat Z_n\|_\infty^p\bigr] +u_n(1)\bigr)\le \frac{c_2}{n^{p/2}}, 
\]
which yields the statement of Theorem~\ref{Thm1}.

It remains to prove~\eqref{G7}. Let $n\in\N$. Clearly, for every $t\in[0,1]$, 
\[
\eultr_{n,t}=G(x_0)+\int_0^t \widetilde\mu(\eultr_{n,\usn})\, ds+\int_0^t\widetilde\sigma(\eultr_{n,\usn})\,dW_s.
\]
Since $G'$ is absolutely continuous, see Lemma~\ref{transform1}(ii), we may apply It\^{o}'s formula, see e.g.~\cite[Problem 3.7.3]{ks91}, to obtain that $\PP\text{-a.s.}$ for all $t\in[0,1]$,
\begin{align*}
G(\eul_{n,t}) & =G(x_0)+\int_0^t(G'(\eul_{n,s})\cdot \mu(\eul_{n,\usn})+\tfrac{1}{2}G''(\eul_{n,s})\cdot\sigma^2(\eul_{n,\usn}))\,ds\\
& \qquad +\int_0^t G'(\eul_{n,s})\cdot\sigma(\eul_{n,\usn})\,dW_s\\
& = G(x_0)+\int_0^t \widetilde\mu(G(\eul_{n,\usn}))\, ds
+ \int_0^t \bigl(G'(\eul_{n,s})-G'(\eul_{n,\usn})\bigr)\cdot \mu(\eul_{n,\usn})\,ds \\
& \qquad +\int_0^t\widetilde\sigma(G(\eul_{n,\usn}))\,dW_s 
+  \int_0^t \bigl(G'(\eul_{s,n})-G'(\eul_{n,\usn})\bigr)\cdot \sigma(\eul_{n,\usn})\,dW_s\\
& \qquad + \frac{1}{2}\cdot \int_0^t \bigl(G''(\eul_{n,s})-G''(\eul_{n,\usn})\bigr)\cdot \sigma^2(\eul_{n,\usn})\,ds. 
\end{align*}
It follows that $\PP\text{-a.s.}$ for all $t\in[0,1]$,
\[
G(\eul_{n,t})-\eultr_{n,t}=\sum_{i=1}^3 V_{n,i, t},
\]
where 
\begin{align*}
V_{n,1,t} & = \int_0^t (\widetilde\mu(G(\eul_{n,\usn}))- \widetilde \mu(\eultr_{n,\usn}))\, ds+ \int_0^t(\widetilde\sigma(G(\eul_{n,\usn}))- \widetilde \sigma(\eultr_{n,\usn}))\,dW_s,\quad \\
V_{n,2,t}&=\int_0^t \bigl(G'(\eul_{n,s})-G'(\eul_{n,\usn})\bigr)\cdot \mu(\eul_{n,\usn})\,ds 
+  \int_0^t \bigl(G'(\eul_{n,s})-G'(\eul_{n,\usn})\bigr)\cdot \sigma(\eul_{n,\usn})\,dW_s,\\
V_{n,3,t} & = \frac{1}{2}\cdot \int_0^t \bigl(G''(\eul_{n,s})-G''(\eul_{n,\usn})\bigr)\cdot \sigma^2(\eul_{n,\usn})\,ds. 
\end{align*}
Hence, for all $t\in[0,1]$,
\begin{equation}\label{jj}
u_n(t)\leq 3^p\cdot \sum_{i=1}^3 \EE\bigl[\sup_{s\in[0,t]} |V_{n,i,s}|^p\bigr].
\end{equation}

We next estimate the single summands on the right hand side of \eqref{jj}. 
Using the H\"older inequality, the Burkholder-Davis-Gundy inequality and the
Lipschitz continuity of $\widetilde\mu$ and $\widetilde\sigma$, see Lemma \ref{transform1}(ii), 
we obtain that there exists
 $c\in(0, \infty)$ such that for all $n\in\N$ and all $t\in[0,1]$,
\begin{equation}\label{t3}
\EE\bigl[\sup_{s\in[0,t]} |V_{n,1,s}|^p\bigr]\leq c\cdot \int_0^t \EE\bigl[|G(\eul_{n,\usn}))-\eultr_{n,\usn}|^p\bigr]\, ds\leq c\cdot \int_0^t u_n(s)\, ds.
\end{equation}
Furthermore, using the H\"older inequality, the Burkholder-Davis-Gundy inequality as well as the Lipschitz continuity of $G'$, see Lemma \ref{transform1}(ii), and employing \eqref{LG} as well as  Lemma~\ref{eulprop} we conclude that there exist
 $c_1, c_2,c_3\in(0, \infty)$ such that for all $n\in\N$ and all $t\in[0,1]$,
\begin{equation}\label{t4}
\begin{aligned}
\EE\bigl[\sup_{s\in[0,t]} |V_{n,2,s}|^p\bigr]&\leq c_1\cdot \int_0^t \EE\bigl[|G'(\eul_{n,s}))-G'(\eul_{n,\usn})|^p\cdot (|\mu(\eul_{n,\usn})|^p+|\sigma(\eul_{n,\usn})|^p\bigr)\bigr]\, ds\\
&\leq c_2\cdot \int_0^t \bigl(\EE\bigl[|\eul_{n,s}-\eul_{n,\usn}|^{2p}\bigr]\bigr)^{1/2}\cdot\bigl(1+ \EE\bigl[|\eul_{n,\usn}|^{2p}\bigr]\bigr)^{1/2}\, ds\leq \frac{c_3}{n^{p/2}}.
\end{aligned}
\end{equation}

For estimating  $\EE\bigl[\sup_{s\in[0,t]} |V_{n,3,s}|^p\bigr]$ we put
\[
B= \Bigl(\bigcup_{i=1}^{k+1} (\xi_{i-1}, \xi_{i})^2\Bigr)^c
\] 
and we note that 
$B=\bigcup_{i=1}^k \{(x,y)\in\R^2:  (x-\xi_{i})\cdot(y-\xi_{i})\leq 0\}$.
Using Lemma~\ref{transform1}(ii) and~\eqref{LG} 
we obtain 
that there exists
 $ c\in(0, \infty)$ such that for all $x,y\in\R$,
\begin{align*}
&|G''(x)\cdot\sigma^2(y)-G''(x)\cdot\sigma^2(y)|\leq \begin{cases}
c\cdot (1+y^2)\cdot|x-y|, & (x,y)\in B^c, \\
c\cdot (1+y^2),&(x,y)\in B.
\end{cases}
\end{align*}
Hence  
there exists
 $ c\in(0, \infty)$ such that
for all $t\in[0,1]$,
\begin{equation}\label{end1}
\begin{aligned}
 \sup_{s\in[0,t]} |V_{n,3,s}|^p 
& \leq c\cdot \Bigl(\Bigl|\int_0^t(1+\eul_{n,\usn}^2)\cdot |\eul_{n,s}-\eul_{n,\usn}|\, ds \Bigr|^p \\
& \qquad\qquad + \Bigl| \int_0^t(1+\eul_{n,\usn}^2)\cdot 1_{\{(\eul_{n,s}, \eul_{n,\usn})\in B\}}\, ds\Bigr|^p\Bigr).
\end{aligned}
\end{equation}
Using Lemma~\ref{eulprop} we obtain as in~\eqref{t4}
that there exists $c\in (0,\infty)$ such that for all $t\in[0,1]$,
\begin{equation}\label{t2}
\begin{aligned}
\EE \Bigl[\Bigl|\int_0^t(1+\eul_{n,\usn}^2)\cdot |\eul_{n,s}-\eul_{n,\usn}|\, ds \Bigr|^p\Bigr] & \le \frac{c}{n^{p/2}}.
\end{aligned}
\end{equation}
Furthermore, for all $i\in\{1, \ldots, k\}$ and all $s\in[0,1]$,
\begin{align*}
|\eul_{n,\usn}|\cdot 1_{\{(\eul_{n,s}-\xi_i)\cdot (\eul_{n,\usn}-\xi_i)\leq 0\}} & \leq (|\xi_i|+|\eul_{n,\usn}-\xi_i|)\cdot 1_{\{(\eul_{n,s}-\xi_i)\cdot (\eul_{n,\usn}-\xi_i)\leq 0\}}\\ & \le
(|\xi_i|+|\eul_{n,\usn}-\eul_{n,s}|)\cdot \ind_{\{(\eul_{n,s}-\xi_i)\cdot (\eul_{n,\usn}-\xi_i)\leq 0\}},
\end{align*}
which yields that for all $s\in[0,1]$,
\[
(1+\eul_{n,\usn}^2)\cdot 1_{\{(\eul_{n,s}, \eul_{n,\usn})\in B\}} \le (1+2\max_{i=1,\dots,k}\xi_i^2)\cdot \sum_{i=1}^k \ind_{\{(\eul_{n,s}-\xi_i)\cdot (\eul_{n,\usn}-\xi_i)\leq 0\}} + 2 (\eul_{n,\usn}-\eul_{n,s})^2.
\]
By the latter inequality and Lemma~\ref{eulprop} we conclude that there exists $c\in (0,\infty)$ such that for all $t\in[0,1]$, 
\begin{equation}\label{end2}
\begin{aligned}
& \EE\Bigl[\Bigl| \int_0^t(1+\eul_{n,\usn}^2)\cdot 1_{\{(\eul_{n,s}, \eul_{n,\usn})\in B\}}\, ds\Bigr|^p\Bigr]\\ & \qquad\qquad\le c\cdot  \sum_{i=1}^k \EE\Bigl[\Bigl|\int_0^t\ind_{\{(\eul_{n,s}-\xi_i)\cdot (\eul_{n,\usn}-\xi_i)\leq 0\}}\, ds \Bigr|^p\Bigr]
 + \frac{c}{n^{p}}.
\end{aligned}
\end{equation}
Combining~\eqref{end1},~\eqref{t2} and~\eqref{end2} we see that there exists $c\in (0,\infty)$ such that for all $t\in[0,1]$,
\[
\EE\bigl[\sup_{s\in[0,t]} |V_{n,3,s}|^p\bigr] \le  \frac{c}{n^{p/2}} + c\cdot
 \sum_{i=1}^k \EE\Bigl[\Bigl|\int_0^t\ind_{\{(\eul_{n,s}-\xi_i)\cdot (\eul_{n,\usn}-\xi_i)\leq 0\}}\, ds \Bigr|^p\Bigr], 
\]
which jointly with~\eqref{jj}, \eqref{t3} and~\eqref{t4} yields the estimate~\eqref{G7} and hereby completes the proof of Theorem~\ref{Thm1}.

\subsection{Proof of Theorem~\ref{Thm2}}\label{4.5}
Clearly, for all $n\in\N$, 
\begin{equation}\label{two0}
   \bigl(\EE\bigl[\|X-\overline X_n\|_q^p\bigr]\bigr)^{1/p} 
\le \bigl(\EE\bigl[\|X-\eul_n\|_q^p\bigr]\bigr)^{1/p} +\bigl(\EE\bigl[\|\eul_n-\overline X_n\|_q^p\bigr]\bigr)^{1/p}. 
\end{equation}
Moreover, by Theorem~\ref{Thm1} there exists $c\in (0,\infty)$ such that for all $n\in\N$,
\begin{equation}\label{two1}
\bigl(\EE\bigl[\|X-\eul_n\|_q^p\bigr]\bigr)^{1/p} \le \bigl(\EE\bigl[\|X-\eul_n\|_\infty^p\bigr]\bigr)^{1/p} \le c/\sqrt{n}.
\end{equation}

For
$n\in\N$ 
define a stochastic process  
$\overline W_n = (\overline W_{n,t})_{t\in[0,1]}$ by
\[
\overline W_{n,t} = (n\cdot t-i)\cdot W_{n,(i+1)/n} + (i+1-n\cdot t)\cdot  W_{n,i/n}
\]
for $t\in [i/n,(i+1)/n]$
 and $i\in\{0,\ldots,n-1\}$. 
Then for every $r\in [1,\infty)$
there exists $c\in (0,\infty)$ such that for all $n\in\N$,
\begin{equation}\label{two01}
\bigl(\EE\bigl[\|W-\overline W_n\|_q^r\bigr]\bigr)^{1/r}\leq \begin{cases} 
c/\sqrt n, & \text{ if }q < \infty,\\ c\sqrt{\ln (n+1)}/\sqrt{n}, & \text{ if }q = \infty,
\end{cases} 
\end{equation}
see, e.g.~\cite{Speckman79} for the case $q\in[1, \infty)$ and~\cite{Faure92} for the case $q=\infty$.

Note that for all $n\in\N$ and all $t\in[0,1]$, 
\begin{align*}
|\eul_{n,t} - \overline X_{n,t}| & = 
\Bigl|\sum_{i=0}^{n-1}\sigma(\eul_{n,i/n})\cdot \ind_{[i/n,(i+1)/n]}(t)\cdot (W_t- \overline W_{n,t})\Bigr|\\
& \le \sup_{s\in[0,1]} |\sigma(\eul_{n,s})|\cdot |W_t-\overline W_{n,t}|.
\end{align*}
Hence, by~\eqref{LG} and Lemma~\ref{eulprop} there exist $c_1,c_2\in (0,\infty)$ such that for all $n\in\N$,
\begin{align*}
\bigl(\EE\bigl[\|\eul_n-\overline X_n\|_q^p\bigr]\bigr)^{1/p} &\le c_1 \cdot \bigl(1 + \bigl(\EE\bigl[\|\eul_n\|_\infty^{2p}\bigr]\bigr)^{1/(2p)}\bigr)\cdot \bigl(\EE\bigl[\|W-\overline W_n\|_q^{2p}\bigr]\bigr)^{1/{(2p)}}\\
&\le c_2\cdot \bigl(\EE\bigl[\|W-\overline W_n\|_q^{2p}\bigr]\bigr)^{1/{(2p)}},
\end{align*}
which jointly with~\eqref{two01}, \eqref{two1} and~\eqref{two0} completes the proof of the theorem.

\bibliographystyle{acm}
\bibliography{bibfile}

\def\cprime{$'$} \def\cprime{$'$}
\begin{thebibliography}{10}

\bibitem{Faure92}
{\sc Faure, O.}
\newblock {\em Simulation du mouvement brownien et des diffusions}.
\newblock Thesis. ENPC, 1992.

\bibitem{GJS17}
{\sc Gerencs\'{e}r, M., Jentzen, A., and Salimova, D.}
\newblock On stochastic differential equations with arbitrarily slow
  convergence rates for strong approximation in two space dimensions.
\newblock {\em Proc. Roy. Soc. A 473\/} (2017), 20170104.

\bibitem{GLN17}
{\sc G{\"o}ttlich, S., Lux, K., and Neuenkirch, A.}
\newblock The {E}uler scheme for stochastic differential equations with
  discontinuous drift coefficient: {A} numerical study of the convergence rate.
\newblock {\em arXiv:1705.04562\/} (2017), 18 pages.

\bibitem{g98b}
{\sc Gy{\"o}ngy, I.}
\newblock A note on {E}uler's approximations.
\newblock {\em Potential Anal. 8}, 3 (1998), 205--216.

\bibitem{gk96b}
{\sc Gy{\"o}ngy, I., and Krylov, N.}
\newblock Existence of strong solutions for {I}t{\^o}'s stochastic equations
  via approximations.
\newblock {\em Probab. Theory Related Fields 105}, 2 (1996), 143--158.

\bibitem{hhj12}
{\sc Hairer, M., Hutzenthaler, M., and Jentzen, A.}
\newblock Loss of regularity for {K}olmogorov equations.
\newblock {\em Ann. Probab. 43}, 2 (2015), 468--527.

\bibitem{HalidiasKloeden2008}
{\sc Halidias, N., and Kloeden, P.~E.}
\newblock A note on the {E}uler-{M}aruyama scheme for stochastic differential
  equations with a discontinuous monotone drift coefficient.
\newblock {\em BIT 48}, 1 (2008), 51--59.

\bibitem{HHMG18}
{\sc Hefter, M., Herzwurm, A., and M{\"u}ller-Gronbach, T.}
\newblock Lower error bounds for scalar {SDE}s with non-lipschitzian
  coefficients.
\newblock {\em To appear in The Annals of Applied Probability.\/} (2018).

\bibitem{HefJ17}
{\sc Hefter, M., and Jentzen, A.}
\newblock On arbitrarily slow convergence rates for strong numerical
  approximations of {C}ox-{I}ngersoll-{R}oss processes and squared {B}essel
  processes.
\newblock {\em arXiv:1702.08761\/} (2017), 40 pages.

\bibitem{hmr01}
{\sc Hofmann, N., M{\"u}ller-Gronbach, T., and Ritter, K.}
\newblock The optimal discretization of stochastic differential equations.
\newblock {\em J. Complexity 17}, 1 (2001), 117--153.

\bibitem{hjk11}
{\sc Hutzenthaler, M., Jentzen, A., and Kloeden, P.~E.}
\newblock Strong and weak divergence in finite time of {E}uler's method for
  stochastic differential equations with non-globally {L}ipschitz continuous
  coefficients.
\newblock {\em Proc. R. Soc. Lond. Ser. A Math. Phys. Eng. Sci. 467\/} (2011),
  1563--1576.

\bibitem{JMGY15}
{\sc Jentzen, A., M\"uller-Gronbach, T., and Yaroslavtseva, L.}
\newblock On stochastic differential equations with arbitrary slow convergence
  rates for strong approximation.
\newblock {\em Commun. Math. Sci. 14}, 7 (2016), 1477--1500.

\bibitem{ks91}
{\sc Karatzas, I., and Shreve, S.~E.}
\newblock {\em Brownian motion and stochastic calculus}, second~ed., vol.~113
  of {\em Graduate Texts in Mathematics}.
\newblock Springer-Verlag, New York, 1991.

\bibitem{LS16}
{\sc Leobacher, G., and Sz{\"o}lgyenyi, M.}
\newblock A numerical method for {SDE}s with discontinuous drift.
\newblock {\em BIT 56}, 1 (2016), 151--162.

\bibitem{LS15b}
{\sc Leobacher, G., and Sz{\"o}lgyenyi, M.}
\newblock A strong order 1/2 method for multidimensional {SDE}s with
  discontinuous drift.
\newblock {\em Ann. Appl. Probab. 27\/} (2017), 2383--2418.

\bibitem{LS18}
{\sc Leobacher, G., and Sz\"olgyenyi, M.}
\newblock Convergence of the {E}uler-{M}aruyama method for multidimensional
  {SDE}s with discontinuous drift and degenerate diffusion coefficient.
\newblock {\em Numer. Math. 138}, 1 (2018), 219--239.

\bibitem{LTS15}
{\sc Leobacher, G., Thonhauser, S., and Sz{\"o}lgyenyi, M.}
\newblock On the existence of solutions of a class of {SDE}s with discontinuous
  drift and singular diffusion.
\newblock {\em Electron. Commun. Probab. 20\/} (2015), 14 pp.

\bibitem{Mao08}
{\sc Mao, X.}
\newblock {\em Stochastic differential equations and applications}, second~ed.
\newblock Horwood Publishing Limited, Chichester, 2008.

\bibitem{m02}
{\sc M{\"u}ller-Gronbach, T.}
\newblock The optimal uniform approximation of systems of stochastic
  differential equations.
\newblock {\em Ann. Appl. Probab. 12}, 2 (2002), 664--690.

\bibitem{MG02_habil}
{\sc M{\"u}ller-Gronbach, T.}
\newblock Strong approximation of systems of stochastic differential equations.
\newblock {\em Habilitation thesis, TU Darmstadt\/} (2002), iv+161.

\bibitem{m04}
{\sc M{\"u}ller-Gronbach, T.}
\newblock Optimal pointwise approximation of {SDE}s based on {B}rownian motion
  at discrete points.
\newblock {\em Ann. Appl. Probab. 14}, 4 (2004), 1605--1642.

\bibitem{MGRY2018}
{\sc M{\"u}ller-Gronbach, T., and Yaroslavtseva, L.}
\newblock A note on strong approximation of {SDE}s with smooth coefficients
  that have at most linearly growing derivatives.
\newblock {\em J. Math. Anal. Appl. 467\/} (2018), 1013--1031.

\bibitem{NSS18}
{\sc Neuenkirch, A., Sz\"olgyenyi, M., and Szpruch, L.}
\newblock An adaptive {E}uler-{M}aruyama scheme for stochastic differential
  equations with discontinuous drift and its convergence analysis.
\newblock {\em arXiv:1802.04521\/} (2018), 25 pages.

\bibitem{Tag16}
{\sc Ngo, H.-L., and Taguchi, D.}
\newblock Strong rate of convergence for the {E}uler-{M}aruyama approximation
  of stochastic differential equations with irregular coefficients.
\newblock {\em Math. Comp. 85}, 300 (2016), 1793--1819.

\bibitem{Tag2017b}
{\sc Ngo, H.-L., and Taguchi, D.}
\newblock On the {E}uler-{M}aruyama approximation for one-dimensional
  stochastic differential equations with irregular coefficients.
\newblock {\em IMA J. Numer. Anal. 37}, 4 (2017), 1864--1883.

\bibitem{Tag2017a}
{\sc Ngo, H.-L., and Taguchi, D.}
\newblock Strong convergence for the {E}uler-{M}aruyama approximation of
  stochastic differential equations with discontinuous coefficients.
\newblock {\em Statist. Probab. Lett. 125\/} (2017), 55--63.

\bibitem{RevuzYor2005}
{\sc Revuz, D., and Yor, M.}
\newblock {\em Continuous martingales and {B}rownian motion}, third~ed.
\newblock Springer-Verlag, Berlin, 1995.

\bibitem{Speckman79}
{\sc Speckman, P.}
\newblock {$L_p$}-approximation of autoregressive {G}aussian processes.
\newblock {\em Report, Dep. of Statistics, University of Oregon\/} (1979).

\bibitem{Y17}
{\sc Yaroslavtseva, L.}
\newblock On non-polynomial lower error bounds for adaptive strong
  approximation of {SDE}s.
\newblock {\em J. Complexity 42\/} (2017), 1--18.

\end{thebibliography}

\end{document}